\newcommand{\gm}{\mathop{\rm gm}\nolimits}
\newcommand{\Hsm}{\mathop{\rm Hsm}\nolimits}
\DeclareMathOperator{\id}{Id}
\DeclareMathOperator{\spec}{Spec}
\DeclareMathOperator{\hspec}{Hspec}
\DeclareRobustCommand{\stirling}{\genfrac\{\}{0pt}{}}
\newcommand{\N}{\mathbb{N}}
\newcommand{\R}{\mathbb{R}}
\newcommand{\C}{\mathbb{C}}
\newcommand{\LL}{\mathcal{L}}
\theoremstyle{plain}
\newtheorem{theorem}{Theorem}[section]
\newtheorem{lemma}[theorem]{Lemma}
\newtheorem{proposition}[theorem]{Proposition}
\newtheorem{corollary}[theorem]{Corollary}
\theoremstyle{definition}
\newtheorem{definition}[theorem]{Definition}
\newtheorem{example}[theorem]{Example}
\theoremstyle{remark}
\newtheorem{remark}[theorem]{Remark}
\begin{document}
\bibliographystyle{plain} 
\title{Spectral theory of weighted hypergraphs via tensors}
\author[1]{Francesco Galuppi\footnote{Email address: fgaluppi@impan.pl}}
\author[2,3]{Raffaella Mulas\footnote{Email address: raffaella.mulas@mis.mpg.de}}
\author[4]{Lorenzo Venturello \footnote{Email address: lven@kth.se}}
\affil[1]{Institute of Mathematics of the Polish Academy of Sciences, Warsaw, Poland}
\affil[2]{The Alan Turing Institute, London, UK}
\affil[3]{University of Southampton, Southampton, UK}
\affil[4]{KTH Royal Institute of Technology, Stockholm, Sweden}
\date{}
\maketitle

\allowdisplaybreaks[4]

	\begin{abstract}
One way to study a hypergraph is to attach to it a tensor. Tensors are a generalization of matrices, and they are an efficient way to encode information in a compact form. 
In this paper we study how properties of weighted hypergraphs are reflected on eigenvalues and eigenvectors of their associated tensors. We also show how to efficiently compute eingenvalues with some techniques from numerical algebraic geometry.\newline

\noindent {\bf Keywords:} Spectral hypergraph theory, Tensors, Weighted hypergraphs, Eigenvalues
	\end{abstract}

\section{Introduction}\label{sec: intro}
\subsection*{Spectral hypergraph theory}
Spectral graph theory is the study of the qualitative properties of a graph $G=(V,E)$ that can be inferred from the \emph{spectrum}, i.e.\ the multiset of the eigenvalues, of a square matrix associated to $G$. If the vertices of $G$ are labelled as $v_1,\ldots,v_N$, the considered square matrices are usually the $N\times N$ \emph{adjacency matrix} $A$, whose entries are
\begin{equation*}
   A_{ij}:=\begin{cases}1 & \text{if }(v_i,v_j)\in E\\
   0 & \text{otherwise,}
    \end{cases}
\end{equation*}the \emph{Kirchhoff Laplacian} 
\begin{equation*}
    K:=D-A,
\end{equation*}
where $D$ is the diagonal matrix of the degrees, and the \emph{normalized Laplacian}
\begin{equation*}
    L:=\id-D^{-1/2}AD^{-1/2},
\end{equation*}
 where $\id$ is the $N\times N$ identity matrix. There is a $1:1$ correspondence between graphs and each of these operators. While there exist non-isomorphic graphs with the same spectra, nevertheless such spectra are known to detect many important geometric properties of the graph. Thus, if two graphs are isospectral with respect to a given operator, they have similar structures. Spectral graph theory has a long history, it has many connections with the spectral theory in Riemannian geometry, and it is widely used in applications. In fact, because empirical networks can often be modeled as graphs, the computation of the spectra is a  valuable  tool for studying and analyzing real data. Moreover, this theory also finds applications in the study of dynamical systems on graphs. The normalized Laplacian is isospectral to the \emph{random walk Laplacian}
\begin{equation*}
    \mathcal{L}:=\id-D^{-1}A,
\end{equation*}whose off-diagonal entry
\begin{equation*}
    \mathcal{L}_{ij}=-\frac{A_{ij}}{\deg v_i}
\end{equation*}
is minus  the probability that a random walker goes from $v_i$ to $v_j$. Hence, the spectral theory of $L$ and $\mathcal{L}$ has many connections with random walks on graphs. We refer the reader to \cite{chung,brouwer2011spectra} for classical monographs on the spectral theory of graphs.\newline

Hypergraphs are defined as a generalization of graphs in which edges can link more than two vertices, that is, edges are \emph{sets} of vertices of any cardinality. They allow the modeling of many more real networks than graphs, therefore they are often used in applications as well, see for instance \cite{entropy,BodoKatonaSimon,klamt2009hypergraphs,bitcoin,ZhangLiu,lanchier2013stochastic}. It is natural to ask what is the best way to generalize spectral graph theory to the case of hypergraphs, but the answer is not univocal. In fact, the spectral theory of hypergraphs can be studied either via matrices or via tensors, and the two approaches have different advantages. While there is a $1:1$ correspondence between hypergraphs and their associated tensors, the same does not hold for the hypergraph adjacency and Laplacian matrices. Hence, the spectrum of a hypergraph tensor is expected to detect more precise structural properties of the hypergraph than the spectrum of a hypergraph matrix. However, the tensor eigenvalue problem is NP-hard \cite{NPhard}, therefore it is more convenient to consider matrices for the analysis of big data that are modeled with hypergraphs.\newline

In this work, we bring forward the spectral theory of hypergraphs via tensors and we generalize the operators in \cite{Banerjee,Parui,Ouvrard,Ouvrard2,BanerjeeChar,signless} to the case of weighted hypergraphs, for which each edge has a positive weight. We show that most of the spectral properties that are known for unweighted hypergraphs can be generalized to the weighted case, and we prove several results which are also new for the unweighted case. We refer to \cite{QL} for a monograph on the spectral theory via tensors of uniform, unweighted hypergraphs. We refer to \cite{Multilayer} for the related spectral theory of multilayer networks. For completeness, we also refer the reader to \cite{MKJ,Reff2014,ReffRusnak,MulasZhang,JM2019,JM2020,Lucas2013} for a vast\,---\,but by no means complete\,---\,literature on the spectral theory of hypergraphs via matrices.

\subsection*{Tensors}
We start by recalling several definitions and properties of tensors that will be needed throughout the paper. We refer to \cite{QL
} for a monograph on this topic. Given $n\in\N$,
we denote by $[n]$ the set $\{1,\ldots,n\}$. We indicate a vector in $\C^n$ by ${\bf x}=(x_1,\dots,x_n)$. We write ${\bf x}\ge 0$ if $x_i\ge 0$ for every $i\in[n]$. \newline
Let $k,n\geq 2$. A $k$-th order $n$-dimensional \emph{tensor} $T$ consists of $n^k$ complex entries
\begin{equation*}
    T_{i_1,\ldots,i_k}\in \mathbb{C},
\end{equation*}
where $i_1,\dots,i_k\in[n]$. The tensor $T$ is \emph{symmetric} if its entries are invariant under any permutation of their indices.\newline

One of the many differences between the spectral theory of matrices and of tensors is that a the eigenvalues of a real symmetric tensor do not need to be real. 

Given a vector $\mathbf{x}\in\mathbb{C}^n$, define $T\mathbf{x}^{k-1}\in\mathbb{C}^n$ by
\begin{equation*}
    (T\mathbf{x}^{k-1})_i=\sum_{i_2,\ldots,i_k\in [n]} T_{i,i_2,\ldots,i_k}x_{i_2}\cdots x_{i_k}.
\end{equation*}Let $\mathbf{x}^{[k-1]}\in \mathbb{C}^n$ be the vector with entries $x^{[k-1]}_i:=x^{k-1}_i$. If $T\mathbf{x}^{k-1}=\lambda \mathbf{x}^{[k-1]}$ for some $\lambda\in\C$ and some non-zero vector $\mathbf{x}\in\C^n$, then we say that $(\lambda,\mathbf{x})$ is an \emph{eigenpair} for $T$. The number $\lambda$ is an \emph{eigenvalue} of $T$ and $\mathbf{x}$ is an \emph{eigenvector}. 
The \emph{spectral radius} of $T$, denoted $\rho(T)$, is the largest modulus of the eigenvalues of $T$. \newline
Let $\id$ denote the $k$-th order $n$-th dimensional \emph{unit tensor}, with entries
\begin{equation*}
    \id_{i_1,\ldots,i_k}:=\begin{cases}1 & \text{ if }i_1=\ldots=i_k\\
    0 & \text{ otherwise.}\end{cases}
    \end{equation*}
 Just as matrices, tensors have a determinant. As illustrated in \cite[Section 2.1.3]{QL}, the determinant of a tensor $T$ is the resultant of the system of equations $T\mathbf{x}^{k-1}=0$. It is a polynomial in the entries of $T$ that vanishes if and only if the system has a non-zero solution.
 As shown in \cite[Theorem 2.12]{QL},  the eigenvalues of $T$ are the roots of the \emph{characteristic polynomial}
\begin{equation*}
    \varphi_T(\lambda):=\det(T-\lambda\id)
\end{equation*}
 of $T$. The \emph{spectrum} of $T$, denoted $\spec(T)$, is the multiset of its eigenvalues, counted with multiplicity as roots of the characteristic polynomial. An eigenpair $(\lambda,\mathbf{x})$ is given by an \emph{$H$-eigenvalue} and an \emph{$H$-eigenvector}, respectively, if $\lambda\in \mathbb{R}$ and $\mathbf{x}\in\mathbb{R}^n$. We let $\hspec(T)$ denote the set of distinct $H$-eigenvalues of $T$
.\newline
The tensor $T$ is \emph{non-negative} if all its entries are non-negative. The tensor $T$ is \emph{weakly irreducible} 
if, for any non-empty proper index subset $J$ of $[n]$, there is at least one entry
\begin{equation*}
    T_{i_1,\ldots,i_k}\neq 0,
\end{equation*}where $i_1\in J$ and at least one index $i_j\in [n]\setminus J$, for $j=2,\ldots,k$.\newline 
Similarly, $T$ is \emph{reducible} if there exists a non-empty proper set $J$ of $[n]$ such that  
\begin{equation*}
    T_{i_1,\ldots,i_k}= 0,
\end{equation*}for each $i_1\in J$, $i_2,\ldots,i_k\in [n]\setminus J$. It is \emph{irreducible} if it is not reducible.\newline
In \cite[Theorems 3.25 and 3.26]{QL},  the Perron-Frobenius theorem for non-negative matrices has been generalized to weakly irreducible non-negative tensors.
\begin{theorem}\label{minmax:waklyirr}
If $T$ is non-negative and weakly irreducible, then 
\begin{equation*}
    \rho(T)=\max_{\substack{\mathbf{x}\geq 0\\ \mathbf{x}\neq 0} }\min_{x_i>0}\frac{(T\mathbf{x}^{k-1})_i}{x_i^{k-1}}
\end{equation*}is a positive $H$-eigenvalue, with a positive $H$-eigenvector $\bar{\mathbf{x}}$. Furthermore, $\rho(T)$ is the unique $H$-eigenvalue of $T$ with a positive $H$-eigenvector, and $\bar{\mathbf{x}}$ is the unique positive $H$-eigenvector associated to $\rho(T)$, up to a multiplicative constant.
\end{theorem}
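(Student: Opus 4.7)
The plan is to follow the tensor analogue of the classical Collatz--Wielandt argument. I would first produce a non-negative $H$-eigenpair realizing the right-hand side of the displayed formula, then identify its value with $\rho(T)$, and finally deduce positivity and uniqueness from the weak irreducibility of $T$.

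For existence, define $r(\mathbf{x}):=\min_{x_i>0}(T\mathbf{x}^{k-1})_i/x_i^{k-1}$ on $\{\mathbf{x}\ge 0:\mathbf{x}\neq 0\}$. Since $r$ is invariant under positive scaling, I may restrict to the unit simplex $\Delta:=\{\mathbf{x}\ge 0:\sum_i x_i=1\}$. The function $r$ need not be continuous on $\partial\Delta$, so I would first regularize by considering $T_\varepsilon:=T+\varepsilon\mathcal{E}$, with $\mathcal{E}$ the all-ones $k$-tensor. Since $T_\varepsilon\mathbf{x}^{k-1}$ is coordinatewise positive for every $\mathbf{x}\in\Delta$, the map sending $\mathbf{x}$ to the $\ell^1$-normalization of $(T_\varepsilon\mathbf{x}^{k-1})^{[1/(k-1)]}$ (componentwise $(1/(k-1))$-th root) is continuous from $\Delta$ to $\Delta$, and Brouwer's fixed-point theorem produces a positive $H$-eigenpair $(\lambda_\varepsilon,\mathbf{x}_\varepsilon)$ of $T_\varepsilon$. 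Passing to the limit $\varepsilon\to 0$ along a convergent subsequence yields a non-negative $H$-eigenpair $(\bar\lambda,\bar{\mathbf{x}})$ of $T$, and a standard bounding argument identifies $\bar\lambda$ with $\sup_{\mathbf{x}\in\Delta}r(\mathbf{x})$.

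To see $\bar\lambda=\rho(T)$, take any eigenvalue $\lambda$ of $T$ with eigenvector $\mathbf{y}$; coordinatewise absolute values and non-negativity of $T$ yield $(T|\mathbf{y}|^{k-1})_i\ge|\lambda|\,|y_i|^{k-1}$, so $r(|\mathbf{y}|)\ge|\lambda|$ and hence $\bar\lambda\ge|\lambda|$. Since $\bar\lambda$ is itself an eigenvalue, equality with $\rho(T)$ follows.

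The main obstacle is the positivity and uniqueness of $\bar{\mathbf{x}}$, which is exactly where weak irreducibility is essential. For positivity, suppose the set $J:=\{i:\bar x_i=0\}$ is non-empty and proper. The $H$-eigenvalue equation forces $T_{i_1,i_2,\ldots,i_k}\bar x_{i_2}\cdots\bar x_{i_k}=0$ for every $i_1\in J$ and every tuple $(i_2,\ldots,i_k)$, so no entry $T_{i_1,i_2,\ldots,i_k}$ with $i_1\in J$ and at least one $i_j\in[n]\setminus J$ can be non-zero, contradicting weak irreducibility. For uniqueness, suppose $\mathbf{y}>0$ is another positive $H$-eigenvector with eigenvalue $\mu$; setting $t:=\min_i\bar x_i/y_i$, attained at some index $i_0$, the non-negative vector $\bar{\mathbf{x}}-t\mathbf{y}$ vanishes at $i_0$, and a weak-irreducibility argument parallel to the one above forces $\bar{\mathbf{x}}=t\mathbf{y}$, whence $\mu=\bar\lambda=\rho(T)$.
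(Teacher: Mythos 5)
First, a remark on scope: the paper does not prove this statement at all. It is imported verbatim from \cite[Theorem 3.11]{QL} (the Friedland--Gaubert--Han / Yang--Yang Perron--Frobenius theorem for weakly irreducible non-negative tensors), so there is no in-paper argument to compare yours against. Judged on its own, your sketch follows the right general template (Collatz--Wielandt quotient, regularization by $T+\varepsilon\mathcal{E}$, Brouwer, limit), but the two steps you yourself identify as ``where weak irreducibility is essential'' are exactly the ones that fail as written.

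For positivity: from the eigen-equation at $i_1\in J=\{i:\bar x_i=0\}$ you correctly conclude that $T_{i_1,i_2,\ldots,i_k}\bar x_{i_2}\cdots\bar x_{i_k}=0$ for every tuple, but the inference that ``no entry with $i_1\in J$ and at least one $i_j\in[n]\setminus J$ can be non-zero'' is false: such an entry survives whenever some \emph{other} index $i_j$ lies in $J$, because $\bar x_{i_j}=0$ already kills the product. What your computation actually rules out is that \emph{all} of $i_2,\ldots,i_k$ lie outside $J$ while the entry is non-zero --- that is, it shows $T$ is not reducible. So your argument proves the Chang--Pearson--Zhang theorem for \emph{irreducible} tensors, a strictly stronger hypothesis; weak irreducibility genuinely requires more (e.g.\ passing to the associated $n\times n$ matrix $M$ with $M_{ij}=\sum_{\{i_2,\ldots,i_k\}\ni j}T_{i,i_2,\ldots,i_k}$, which is irreducible as a matrix precisely when $T$ is weakly irreducible, combined with nonlinear Perron--Frobenius theory on the open cone). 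For uniqueness: the trick of subtracting $t\mathbf{y}$ from $\bar{\mathbf{x}}$ is a matrix argument that relies on linearity; for $k\geq 3$ the map $\mathbf{x}\mapsto T\mathbf{x}^{k-1}$ is not linear, so $\bar{\mathbf{x}}-t\mathbf{y}$ is not an eigenvector and there is no eigen-equation on which to run ``a parallel weak-irreducibility argument.'' The standard proofs instead use the Hilbert projective metric and non-expansiveness of $\mathbf{x}\mapsto(T\mathbf{x}^{k-1})^{[1/(k-1)]}$, or a dedicated comparison lemma. Finally, the ``standard bounding argument'' identifying $\bar\lambda$ with $\sup_{\mathbf{x}}r(\mathbf{x})$ is itself the harder half of the tensor Collatz--Wielandt formula and normally presupposes the positive eigenvector you have not yet produced, so it cannot be dismissed in a clause.
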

Now, $T$ is \emph{diagonally dominated} if, for each $i\in [n]$,
\begin{equation}\label{eq:def-dd}
    T_{i,\ldots,i}\geq \sum_{\substack{i_2,\ldots,i_k\in [n]\\ \text{not all equal to }i}}\left|T_{i,i_2,\ldots,i_k}\right|.
\end{equation}By \cite[Theorem 2]{Song}, if $T$ is  any tensor  and $\lambda$ is an eigenvalue for $T$, then there exists $i\in [n]$ such that 
\begin{equation}\label{eq:dd}
    |\lambda-T_{i,\ldots,i}|\leq \sum_{\substack{i_2,\ldots,i_k\in [n]\\ \text{not all equal to }i}}\left|T_{i,i_2,\ldots,i_k}\right|.
\end{equation}An immediate consequence is the following
\begin{corollary}\label{cor:dd}
If $T$ is a diagonally dominated tensor, then all its $H$-eigenvalues are non-negative.
\end{corollary}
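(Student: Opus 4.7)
The plan is to combine the cited inequality \eqref{eq:dd} (Song's theorem) with the definition of diagonal dominance \eqref{eq:def-dd} in one step. Let $\lambda$ be an $H$-eigenvalue of $T$, so in particular $\lambda\in\mathbb{R}$. Applying \eqref{eq:dd}, I pick an index $i\in[n]$ such that
\[
|\lambda-T_{i,\ldots,i}|\leq \sum_{\substack{i_2,\ldots,i_k\in [n]\\ \text{not all equal to }i}}\left|T_{i,i_2,\ldots,i_k}\right|.
\]
Denote the right-hand side by $S_i$ and the diagonal entry $T_{i,\ldots,i}$ by $D_i$. The key observation, which I would make next, is that diagonal dominance \eqref{eq:def-dd} forces $D_i\geq S_i$, and in particular $D_i\geq 0$, since $S_i$ is a sum of absolute values.

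From $|\lambda-D_i|\leq S_i$ I then extract the one-sided bound $\lambda\geq D_i-S_i$. Combining with $D_i\geq S_i$ gives $\lambda\geq 0$, which is exactly the claim.

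There is essentially no obstacle here: the statement is a one-line consequence of Song's localization inequality, and the only subtlety is noticing that diagonal dominance automatically makes each diagonal entry $T_{i,\ldots,i}$ non-negative (so that the bound $D_i-S_i\geq 0$ is meaningful). The fact that $\lambda$ is assumed to be an $H$-eigenvalue is used only to ensure $\lambda$ is real, so that $|\lambda-D_i|\leq S_i$ translates into a usable lower bound for $\lambda$ rather than a disc in the complex plane.
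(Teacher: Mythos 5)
Your argument is correct and is exactly the reasoning the paper intends: the corollary is stated as an ``immediate consequence'' of Song's localization inequality \eqref{eq:dd} together with the definition \eqref{eq:def-dd}, and your one-line derivation $\lambda\geq D_i-S_i\geq 0$ (valid because $\lambda$ is real) is that consequence spelled out. Nothing is missing.
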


\subsection*{Eigenvalue multiplicities}
For a given tensor, the \emph{algebraic multiplicity} of an eigenvalue $\lambda$, denoted $\textrm{am}(\lambda)$, is its multiplicity as a root of the characteristic polynomial. The \emph{geometric multiplicity} of $\lambda$, denoted $\textrm{gm}(\lambda)$, is the dimension of its \emph{eigenvariety}, i.e.\ the variety of its eigenvectors. While algebraic and geometric multiplicity coincide for symmetric matrices, this is not always the case for symmetric tensors.  In \cite[Conjecture 1.1]{multiplicities}, it is conjectured that, for any $\lambda$, 
\begin{equation*}\label{conj1.1}
 \textrm{am}(\lambda)\geq \textrm{gm}(\lambda)(k-1)^{\textrm{gm}(\lambda)-1}.
\end{equation*}Given an eigenvalue $\lambda$, we also introduce its \emph{span multiplicity}, denoted $\textrm{sm}(\lambda)$, as the dimension of the vector space
\begin{equation*}
    \textrm{span}(\{\mathbf{x}\in \mathbb{C}^n\,:\, \mathbf{x} \text{ eigenvector for }\lambda\}).
\end{equation*}Clearly, $\textrm{sm}(\lambda)\geq \textrm{gm}(\lambda)$.  In \cite[Section 3]{Qi} it is conjectured that, for any $\lambda$,
\begin{equation*}
     \textrm{am}(\lambda)\geq  \textrm{sm}(\lambda).
\end{equation*}
If $\lambda$ is a real eigenvalue, we introduce its \emph{Hspan multiplicity}, denoted $\textrm{Hsm}(\lambda)$, as the dimension of the vector space
\begin{equation*}
    \textrm{span}(\{\mathbf{x}\in \mathbb{R}^n\,:\, \mathbf{x} \text{ eigenvector for }\lambda\}).
\end{equation*}In the case of symmetric matrices, every eigenvalue $\lambda$ is an $H$-eigenvalue and
\begin{equation*}
\textrm{am}(\lambda) = \textrm{sm}(\lambda) = \textrm{Hsm}(\lambda) = \textrm{gm}(\lambda).
\end{equation*}
In spectral graph theory and spectral hypergraph theory via matrices, the multiplicity of an eigenvalue is often studied by counting the maximum number of its linearly independent eigenvectors. This is what motivates us to study the span and the Hspan multiplicities.

\subsection*{Stirling numbers}
Given a multiset $i_1,\ldots,i_k$, let $\{i_1,\ldots,i_k\}$ be the set obtained from $i_1,\ldots,i_k$ by not accounting for multiplicity. Given a $k$-th order $n$-dimensional tensor $T$ and an index set $J\subseteq [n]$ of cardinality $r$, we say that an entry $T_{i_1,\ldots,i_k}$ of $T$ \emph{corresponds to} $J$ if $\{i_1,\ldots,i_k\}=J$. We let
\begin{equation*}
    \mathcal{N}(r,k):=|\text{entries of $T$ corresponding to }J|.
\end{equation*}
Given $j\in J$, we define its \emph{$j$-th row} as the $(k-1)$-order $n$-dimensional tensor $T_j$ obtained by setting the first index of $T$ equal to $j$. We also set
\begin{equation*}
 N(r,k):=\frac{\mathcal{N}(r,k)}{r}=|\text{entries of the row $T_j$ corresponding to }J|.
\end{equation*}
Observe that it does not depend on $j$. As shown in \cite{Ouvrard},
\begin{equation*}
    \mathcal{N}(r,k)= \sum_{\substack{k_1,\ldots,k_r\geq 1,\\ \sum_{j}k_j=k}}\frac{k !}{k_1!\cdots k_r!},
\end{equation*}from which it follows that
\begin{equation*}
    N(r,k)=\frac{1}{r}\cdot\left( \sum_{\substack{k_1,\ldots,k_r\geq 1,\\ \sum_{j}k_j=k}}\frac{k !}{k_1!\cdots k_r!}\right).
\end{equation*} By Proposition 5.5 in \cite{mariconda},  we can write $\mathcal{N}(r,k)=\stirling{k}{r}r!$, where 
\begin{equation*}
    \stirling{k}{r}:=\frac{1}{r!}\sum_{j=0}^r(-1)^j\binom{r}{j}(r-j)^k
\end{equation*}
 is the \emph{Stirling number of the second kind}.  This can be seen from  the fact that $ \stirling{k}{r}$ counts the number of $r$-partitions of a set of cardinality $k$, while $r!$ counts the number of permutations of $r$ objects. Hence,
 \begin{equation*}
      N(r,k)=\stirling{k}{r}(r-1)!.
\end{equation*}In particular, $N(k,k)=(k-1)!$. The above characterization of $N(r,k)$ allows us to give a description of the hypergraph tensors which is simpler than the one in \cite{Banerjee,Parui,Ouvrard,Ouvrard2,BanerjeeChar}.
\subsection*{Weighted hypergraphs}
Throughout the paper, we fix a weighted hypergraph $G=(V,E,w)$ with vertex set $V=\{v_1,\ldots,v_N\}$, edge set $E=\{e_1,\ldots,e_M\}$ and weight function $w:E\rightarrow \mathbb{R}_{>0}$. We assume that each edge contains at least two vertices.\newline 
Given $v\in V$, its \emph{degree} is
\begin{equation*}
\deg v:=\sum_{e\in E:\, v\in e}w(e).
\end{equation*}Given $e\in E$, we denote its cardinality by $|e|$. We let
\begin{equation*}
 \delta:=\min_{v\in V}\deg v,
 \quad \Delta:=\max_{v\in V}\deg v  \quad\text{and}\quad \nabla:=\max_{e\in E}|e|.
\end{equation*} $G$ is $\Delta$-\emph{regular} if $\deg v=\Delta$ for each $v\in V$; it is $\nabla$-\emph{uniform} if $|e|=\nabla$ for each $e\in E$.\newline
Given $r\leq \nabla$, we let 
\begin{equation*}
    E_r:=\{e\in E:\, |e|=r\}.
\end{equation*}
$G$ is \emph{connected} if, for every pair of vertices $v,w\in V$, there exists a \emph{path} that connects $v$ and $w$, i.e.\ there exist $\hat{v}_1,\ldots,\hat{v}_k\in V$ and $\hat{e}_1,\ldots,\hat{e}_{k-1}\in E$ such that $\hat{v}_1=v$, $\hat{v}_k=w$, and $\{\hat{v}_i,\hat{v}_{i+1}\}\subseteq \hat{e}_i$ for each $i=1,\ldots,k-1$.\newline
The hypergraph $G=(V,E,w)$ is \emph{unweighted} if $w(e)=1$ for each $e\in E$. In this case, we use the notation $G=(V,E)$.\newline
A \emph{simple graph} is an unweighted $2$-uniform hypergraph.

\subsection*{Structure of the paper}
In Section \ref{section:hyp} we introduce the tensors we are interested in. Our definitions generalize the known hypergraph tensors to the case of weighted hypergraphs. In Section \ref{sec: first properties} we prove some basic properties of the eigenvalues of such tensors. Section \ref{section: duplicate} is devoted to duplicate vertices, a combinatorial feature of a hypergraph that has a clear impact on the spectra. This is particularly evident for \emph{hyperflowers}, a remarkable family of hypergraphs that generalize star graphs and that we analyse in Section \ref{section: iperfiore}. Section \ref{section: symmetries} adds more words to our dictionary. We are able to translate properties of the hypergraphs, such as being bipartite or colorable, to spectral symmetries. Finally, in Section \ref{sec: computations} we show how to effectively compute eigenvalues and their geometric multiplicities. We apply techniques from numerical algebraic geometry.

\section{Hypergraph tensors}\label{section:hyp}
As we recalled in the introduction, there are several tensors attached to a hypergraph. In this section we generalize these tensors for weighted hypergraphs.\newline
Let $G=(V,E,w)$ be a weighted hypergraph on $N$ nodes, with largest edge cardinality $\nabla$. The \emph{adjacency tensor} of $G$ is the $\nabla$-th order $N$-dimensional tensor $A=A(G)$ with entries
\begin{equation*}
A_{i_1,\ldots,i_\nabla}:=\begin{cases}
0 & \text{ if }\{v_{i_1},\ldots,v_{i_\nabla}\}\notin E\\
\frac{w(e)}{N(r,\nabla)} &  \text{ if }\{v_{i_1},\ldots,v_{i_\nabla}\}=e\in E_r.\\
\end{cases}
\end{equation*}
The \emph{Kirchhoff Laplacian tensor} of $G$ is the $\nabla$-th order $N$-dimensional tensor $K=K(G)$ with entries
\begin{equation*}
K_{i_1,\ldots,i_\nabla}:=\begin{cases}
\deg v_{i_1} & \text{ if }i_1=\ldots=i_\nabla\\
-A_{i_1,\ldots,i_\nabla} & \text{ otherwise. }
\end{cases}
\end{equation*}
The \emph{normalized Laplacian tensor} of $G$ is the $\nabla$-th order $N$-dimensional tensor $L=L(G)$ with entries
\begin{equation*}
L_{i_1,\ldots,i_\nabla}:=\begin{cases}
1 & \text{ if }i_1=\ldots=i_\nabla\\
-A_{i_1,\ldots,i_\nabla}\cdot\prod_{j\in\{i_1,\ldots,i_\nabla\}} \frac{1}{ \sqrt[\nabla]{\deg v_{j}}} & \text{ otherwise. }
\end{cases}
\end{equation*}
The \emph{random walk Laplacian tensor} of $G$ is the $\nabla$-th order $N$-dimensional tensor $\mathcal{L}=\mathcal{L}(G)$ with entries
\begin{equation*}
\mathcal{L}_{i_1,\ldots,i_\nabla}:=\begin{cases}
1 & \text{ if }i_1=\ldots=i_\nabla\\
-\frac{A_{i_1,\ldots,i_\nabla}}{\deg v_{i_1}} & \text{ otherwise. }
\end{cases}
\end{equation*}

For unweighted hypergraphs, $A$, $K$, $L$ and $\mathcal{L}$ coincide with the tensors in \cite[Section 3]{Banerjee}. In the case of simple graphs, the adjacency, Kirchhoff Laplacian, normalized Laplacian and random walk Laplacian tensors coincide with the adjacency, Kirchhoff Laplacian, normalized Laplacian and random walk Laplacian matrices, respectively.\newline
Some of the tensors defined above have a signless version. The \emph{signless Kirchhoff Laplacian tensor} of $G$ is the $\nabla$-th order $N$-dimensional tensor $K^+=K^+(G)$ with entries
\begin{equation*}
K^+_{i_1,\ldots,i_\nabla}:=\begin{cases}
\deg v_{i_1} & \text{ if }i_1=\ldots=i_\nabla\\
A_{i_1,\ldots,i_\nabla} & \text{ otherwise. }
\end{cases}
\end{equation*}The \emph{signless normalized Laplacian tensor} of $G$ is the $\nabla$-th order $N$-dimensional tensor $L^+=L^+(G)$ with entries
\begin{equation*}
L^+_{i_1,\ldots,i_\nabla}:=\begin{cases}
1 & \text{ if }i_1=\ldots=i_\nabla\\
A_{i_1,\ldots,i_\nabla}\cdot\prod_{j\in\{i_1,\ldots,i_\nabla\}} \frac{1}{ \sqrt[\nabla]{\deg v_{j}}} & \text{ otherwise. }
\end{cases}
\end{equation*}The \emph{signless random walk Laplacian tensor} of $G$ is the $\nabla$-th order $N$-dimensional tensor $\mathcal{L}^+=\mathcal{L}^+(G)$ with entries
\begin{equation*}
\mathcal{L}^+_{i_1,\ldots,i_\nabla}:=\begin{cases}
1 & \text{ if }i_1=\ldots=i_\nabla\\
\frac{A_{i_1,\ldots,i_\nabla}}{\deg v_{i_1}} & \text{ otherwise. }
\end{cases}
\end{equation*}

 To the best of our knowledge, the definitions of $L^+$ and $\mathcal{L}^+$ are new also for the case of uniform unweighted hypergraphs.  For unweighted hypergraphs,  $K^+$ coincides with the one in \cite{signless}.  In the case of simple graphs, the signless Kirchhoff Laplacian, signless normalized Laplacian and signless random walk Laplacian tensors coincide with the signless Kirchhoff Laplacian, signless normalized Laplacian and signless random walk Laplacian matrices, respectively.

\begin{remark}
From the definition, it is apparent that the tensors $A$, $K$, $K^+$, $L$ and $L^+$ are symmetric and that the tensors $A$, $K^+$, $L^+$ and $\mathcal{L}^+$ are non-negative.
\end{remark}

\begin{example}\label{ex:123}
Let $G=(V,E,w)$ be the weighted hypergraph with vertex set $V=\{v_1,v_2,v_3\}$, edge set $E = \{\{v_1,v_2\},\{v_1,v_2,v_3\}\}$ and weights $w(\{v_1,v_2\})=1$ and $w(\{v_1,v_2,v_3\})=2$.
\begin{figure}[ht]
    \centering
    \includegraphics[width=7cm]{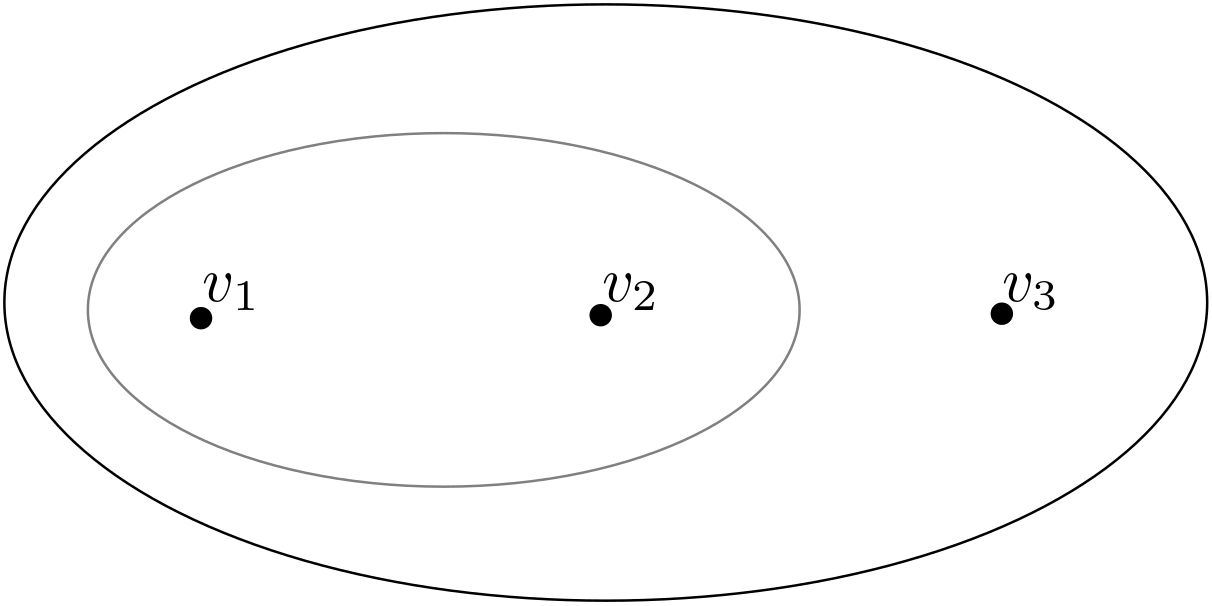}
    \caption{The hypergraph in Example \ref{ex:123}.}
    \label{fig:ex123}
\end{figure} In this case, $N=\nabla=3$. The non-zero entries of the adjacency tensor are
\begin{equation*}
    A_{112}=A_{121}=A_{211}=A_{122}=A_{212}=A_{221}=\frac{w(\{v_1,v_2\})}{N(2,3)}=\frac{1}{3}
\end{equation*}and
\begin{equation*}
    A_{123}=A_{132}=A_{213}=A_{231}=A_{312}=A_{321}=\frac{w(\{v_1,v_2,v_3\})}{N(3,3)}=1.
\end{equation*}Also, since $\deg v_1=\deg v_2=3$ and $\deg v_3=2$, the non-zero entries of $K$ are
\begin{align*}
    &  K_{112}=K_{121}=K_{211}=K_{122}=K_{212}=K_{221}=-\frac{1}{3},\\
    &  K_{123}=K_{132}=K_{213}=K_{231}=K_{312}=K_{321}=-1,\\
    & K_{111}=K_{222}=3,\qquad K_{333}=2.
\end{align*}Similarly, the non-zero entries of $\mathcal{L}$ are
\begin{align*}
    &  \mathcal{L}_{112}=\mathcal{L}_{121}=\mathcal{L}_{211}=\mathcal{L}_{122}=\mathcal{L}_{212}=\mathcal{L}_{221}=-\frac{1}{9},\\
    &  \mathcal{L}_{123}=\mathcal{L}_{132}=\mathcal{L}_{213}=\mathcal{L}_{231}=-\frac{1}{3},\\
    & \mathcal{L}_{312}=\mathcal{L}_{321}=-\frac{1}{2}\\
    & \mathcal{L}_{111}=\mathcal{L}_{222}=\mathcal{L}_{333}=1.
\end{align*}

\end{example}

\section{First properties}\label{sec: first properties}
In this section, we prove the first spectral properties of the hypergraph tensors that we introduced in the previous section. Some of our results generalize those in \cite[Chapter 4]{QL} to weighted hypergraphs.\newline
We start by proving that the normalized Laplacian of a weighted hypergraph has the same spectrum as the random walk Laplacian tensor,  as a generalization of Theorem 3.11 in \cite{Banerjee}, which is stated for the unweighted case.  The same holds for the signless versions. 
\begin{proposition}\label{prop:isospectral}
\begin{enumerate}
    \item The tensors $L$ and $\mathcal{L}$ have the same eigenvalues, counted with algebraic multiplicity.
\item $(\lambda,\mathbf{x})$ is an eigenpair for $L$ if and only if $(\lambda,\mathbf{y})$ is an eigepair for $\mathcal{L}$, where $\mathbf{y}\in\mathbb{C}^N$ is the vector with entries
\begin{equation*}
   y_j=\frac{x_j}{\sqrt[\nabla]{\deg v_{j}}}.
\end{equation*}Hence, the eigenvalues of $L$ and $\mathcal{L}$ have also the same geometric and span multiplicities, and $\hspec(L)=\hspec(\mathcal{L})$.\end{enumerate}
\end{proposition}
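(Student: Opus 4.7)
The plan is to build an explicit linear isomorphism of $\C^N$ that intertwines the eigenvalue equations for $L$ and $\mathcal{L}$. Let $D^{1/\nabla}$ denote the diagonal matrix with entries $d_j^{1/\nabla}$, where $d_j := \deg v_j$. I will substitute $x_j = d_j^{1/\nabla} y_j$ into the equation $L\mathbf{x}^{\nabla-1} = \lambda \mathbf{x}^{[\nabla-1]}$ and show that it becomes $\mathcal{L}\mathbf{y}^{\nabla-1} = \lambda \mathbf{y}^{[\nabla-1]}$.

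The computation is a direct unfolding of the definitions. Expanding $(L\mathbf{x}^{\nabla-1})_i$ and isolating the diagonal contribution gives $x_i^{\nabla-1}$ plus a sum over tuples $(i_2,\ldots,i_\nabla)$ not all equal to $i$, with coefficient $-A_{i,i_2,\ldots,i_\nabla}\prod_{j\in\{i,i_2,\ldots,i_\nabla\}} d_j^{-1/\nabla}$. After substituting $x_{i_\ell}=d_{i_\ell}^{1/\nabla}y_{i_\ell}$ for $\ell\geq 2$, the factors $d_{i_\ell}^{1/\nabla}$ cancel against the corresponding degree terms inside $L$ and leave precisely $d_i^{-1/\nabla}$ outside the sum. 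Hence the off-diagonal part collapses to $-d_i^{-1/\nabla}(A\mathbf{y}^{\nabla-1})_i$, and since $x_i^{\nabla-1}=d_i^{(\nabla-1)/\nabla}y_i^{\nabla-1}$, one obtains
\begin{equation*}
(L\mathbf{x}^{\nabla-1})_i \;=\; d_i^{(\nabla-1)/\nabla}\,(\mathcal{L}\mathbf{y}^{\nabla-1})_i.
\end{equation*}
Dividing the eigenvalue identity componentwise by $d_i^{(\nabla-1)/\nabla}$ then yields $\mathcal{L}\mathbf{y}^{\nabla-1} = \lambda \mathbf{y}^{[\nabla-1]}$, and the argument is reversible.

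Since $D^{1/\nabla}$ has positive real entries, the map $\mathbf{x}\mapsto \mathbf{y}=D^{-1/\nabla}\mathbf{x}$ is a $\C$-linear isomorphism that restricts to an isomorphism of $\R^N$. It therefore sets up bijections between the $\lambda$-eigenvarieties of $L$ and $\mathcal{L}$, and between their linear spans, both over $\C$ and over $\R$. This immediately gives equality of the geometric, span, and Hspan multiplicities and shows that $\hspec(L)=\hspec(\mathcal{L})$. The statement for $L^+$ and $\mathcal{L}^+$ follows from exactly the same computation with the off-diagonal signs flipped.

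The one step that requires additional input is the equality of algebraic multiplicities, since a bijection on eigenpairs does not by itself control the multiplicity of a root of the characteristic polynomial. I expect this to be the main obstacle: the cleanest way to close it is to interpret the identity displayed above as a diagonal similarity of tensors in the sense of Shao's general tensor product, under which the characteristic polynomial (a resultant in the entries) is invariant; this yields $\varphi_L=\varphi_{\mathcal{L}}$ and hence matching algebraic multiplicities. Everything else reduces to the substitution above together with the observation that the change of coordinates is linear and invertible.
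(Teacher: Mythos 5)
Your proposal takes essentially the same route as the paper: the substitution $x_j=(\deg v_j)^{1/\nabla}y_j$ and the resulting identity $(L\mathbf{x}^{\nabla-1})_i=(\deg v_i)^{(\nabla-1)/\nabla}(\mathcal{L}\mathbf{y}^{\nabla-1})_i$ is exactly the computation the authors perform for the eigenpair correspondence and the geometric, span and Hspan multiplicities. For the algebraic multiplicities the paper likewise invokes the diagonal-similarity invariance of the characteristic polynomial (via the tensor $L'(d_1,\ldots,d_N)$ of Fan et al., with $d_j=\sqrt[\nabla]{\deg v_j}$), which is the same fact you propose to import from Shao's tensor product, so your argument matches the paper's step for step.
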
The same holds for $L^+$ and $\mathcal{L}^+$.
\begin{proof}
We only prove the claims for $L$ and $\mathcal{L}$, as the other case is similar. 
\begin{enumerate}
    \item As recalled for instance in \cite[Page 2219]{fan2019spectral}  and in \cite[Theorem 2.3]{shao2013general}, if $d_1,\ldots,d_N\in \mathbb{R}$, then $L$ has the same eigenvalues, counted with algebraic multiplicity, as the $\nabla$-th order $N$-dimensional tensor $L'(d_1,\ldots,d_N)$ with entries
\begin{equation*}
    L'(d_1,\ldots,d_N)_{i_1,\ldots,i_\nabla}=\frac{1}{d_{i_1}^{\nabla-1}}\cdot L_{i_1,\ldots,i_\nabla}\cdot d_{i_2}\cdots d_{i_\nabla}.
\end{equation*}By taking $d_j=\sqrt[\nabla]{\deg v_j}$ for each $j\in[N]$, we have that $L'(d_1,\ldots,d_N)=\mathcal{L}$. This proves the claim for the algebraic multiplicities.
\item The claim for the other multiplicities follows directly from Theorem 2.5 in \cite{shao2013general}.
\end{enumerate}

\end{proof}

The following proposition generalizes Theorem 4.5(b) and (c) in \cite{QL}.

\begin{proposition}
Each of the tensors $A$, $K$, $K^+$, $L$, $L^+$, $\mathcal{L}$ and $\mathcal{L}^+$ has $(\nabla-1)^{N-1}\cdot N$ eigenvalues, counted with algebraic multiplicity, whose sum is:
   \begin{itemize}
    \item $0$, for $A$;
    \item $(\nabla-1)^{N-1}\cdot \left(\sum_{i=1}^N\deg v_i\right)$, for $K$ and $K^+$;
    \item $(\nabla-1)^{N-1}\cdot N$, for $L$, $L^+$, $\mathcal{L}$ and $\mathcal{L}^+$.
\end{itemize}
\end{proposition}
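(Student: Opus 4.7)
The plan is to deduce both assertions from two classical facts about the characteristic polynomial of a tensor, established by Qi (the same reference \cite{Qi} that is already invoked in the excerpt). First, for any $k$-th order $n$-dimensional tensor $T$, the characteristic polynomial $\varphi_T(\lambda)=\det(T-\lambda\id)$ has degree $n(k-1)^{n-1}$ in $\lambda$. Second, the sum of its roots, counted with algebraic multiplicity, equals $(k-1)^{n-1}\cdot\sum_{i=1}^{n} T_{i,i,\ldots,i}$, i.e.\ $(k-1)^{n-1}$ times the sum of the diagonal entries of $T$. Both statements are proved by Qi by reading off the top two coefficients of $\varphi_T$ viewed as a polynomial in $\lambda$ via its resultant expression.

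Applying the first fact with $k=\nabla$ and $n=N$ immediately yields the count $(\nabla-1)^{N-1}\cdot N$ of eigenvalues for each of the seven tensors, all of which are $\nabla$-th order $N$-dimensional by construction. The second fact then reduces every sum identity in the statement to an inspection of the diagonal entries $T_{i,\ldots,i}$, which can be read off directly from the definitions in Section \ref{section:hyp}. For the adjacency tensor $A$, the entry $A_{i,\ldots,i}$ would correspond to the singleton $\{v_i\}$, which is not an edge since every edge is assumed to have cardinality at least $2$; thus $A_{i,\ldots,i}=0$ and the diagonal sum vanishes. For $K$ and $K^+$ the diagonal entries are by definition $\deg v_i$, so their sum is $\sum_{i=1}^N \deg v_i$. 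For each of $L$, $L^+$, $\mathcal{L}$, $\mathcal{L}^+$ the diagonal is identically equal to $1$, so its sum is $N$. Multiplying by $(\nabla-1)^{N-1}$ produces the three stated formulas.

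There is no genuine obstacle here: both ingredients are classical tensor identities that do not require any hypergraph-specific argument, and once they are in place the whole proposition is a matter of transcribing the diagonals from Section \ref{section:hyp}. The only point worth flagging is that the standing assumption $|e|\geq 2$ for every edge is precisely what forces the diagonal of $A$ to vanish; without it one would need to add a $w(\{v_i\})$ correction on the diagonal of $A$ (and correspondingly modify the Laplacian-type tensors), changing all the sums above.
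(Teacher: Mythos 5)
Your proposal is correct and takes essentially the same route as the paper: invoke Qi's facts that the characteristic polynomial has degree $N(\nabla-1)^{N-1}$ and that the sum of its roots is $(\nabla-1)^{N-1}$ times the trace, then read off the diagonal entries from the definitions. The one point where you diverge is that you apply the trace formula directly to all seven tensors, including $\mathcal{L}$ and $\mathcal{L}^+$, which are \emph{not} symmetric; the paper states the cited fact only for symmetric tensors and therefore handles $\mathcal{L}$ and $\mathcal{L}^+$ separately via the isospectrality with $L$ and $L^+$ from Proposition \ref{prop:isospectral}. The degree and trace identities do hold for general tensors, so your argument is salvageable as written, but to match the hypothesis of the result you are quoting you should either cite the general (non-symmetric) version or add the one-line isospectrality reduction.
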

\begin{proof}
As shown in \cite[Section 3]{Qi}, a symmetric tensor has $(\nabla-1)^{N-1}\cdot N$ eigenvalues, counted with algebraic multiplicity, whose sum is $(\nabla-1)^{N-1}$ times the sum of its diagonal elements. Hence, the claim for $A$, $K$, $K^+$, $L$ and $L^+$ follows. The claim for $\mathcal{L}$ and $\mathcal{L}^+$ follows by Proposition \ref{prop:isospectral}.
\end{proof}

In Section \ref{section:hyp} we defined quite a few tensors. However, Proposition \ref{prop:isospectral} tells us that two of them are the same, from the spectrum viewpoint. The next two remarks are a further step in this direction. They show that the spectra of all our tensors are closely related.

\begin{remark}\label{rmk2-}
We have $L^+=2\cdot \id -L \quad \text{and} \quad  \mathcal{L}^+=2\cdot \id -\mathcal{L}$. Therefore, it is easy to check that
\begin{equation*}
    (\lambda,\mathbf{x}) \text{ is an eigenpair for }L \iff  (2-\lambda,\mathbf{x}) \text{ is an eigenpair for }L^+
\end{equation*}and $
    (\lambda,\mathbf{x}) \text{ is an eigenpair for }\mathcal{L} \iff  (2-\lambda,\mathbf{x}) \text{ is an eigenpair for }\mathcal{L}^+$.
Moreover,  since
\begin{equation*}
    K_{i_1,\ldots,i_\nabla}=(\deg v_{i_1})  \mathcal{L}_{i_1,\ldots,i_\nabla},
\end{equation*}we have that

\begin{equation*}
    (0,\mathbf{x}) \text{ is an eigenpair for }K \iff  (0,\mathbf{x}) \text{ is an eigenpair for }\mathcal{L}
\end{equation*}and similarly $(0,\mathbf{x}) \text{ is an eigenpair for }K^+ \iff  (0,\mathbf{x}) \text{ is an eigenpair for }\mathcal{L}^+$. Finally, $(0,\mathbf{x}) \text{ is an eigenpair for }A \iff  (1,\mathbf{x}) \text{ is an eigenpair for }\mathcal{L} \text{ and }\mathcal{L}^+$.
\end{remark}
\begin{remark}
If $G$ is $\Delta$-regular, then
\begin{equation*}
    K=\Delta\cdot \id-A,\quad L=\mathcal{L}=\frac{1}{\Delta}\cdot K\quad \text{and} \quad K^+=2\Delta\cdot \id-K.
\end{equation*}

Hence, in this case, it is easy to see that
\begin{align*}
    \lambda \text{ is an eigenvalue for }K&\iff \Delta-\lambda \text{ is an eigenvalue for }A\\
    &\iff \frac{\lambda}{\Delta} \text{ is an eigenvalue for }L=\mathcal{L}\\
     &\iff 2-\frac{\lambda}{\Delta} \text{ is an eigenvalue for }L^+=\mathcal{L}^+\\
    &\iff 2\Delta-\lambda \text{ is an eigenvalue for }K^+,
\end{align*}with the same multiplicities. In particular, the spectral theories of the different tensors are equivalent to each other for regular weighted hypergraphs.
\end{remark}

Such observations allow us to expand on the previous knowledge on their eigenvalues. For instance, we are in position to discuss existence of H-eigenvectors and H-eigenvalues.
\begin{proposition}\label{prop:Heigenvalues}
\begin{enumerate}
    \item The tensors $A$, $K^+$, $L^+$ and $\mathcal{L}^+$ have at least one $H$-eigenvalue. Their largest $H$-eigenvalue equals their spectral radius, and has a non-negative $H$-eigenvector.
    \item The tensors $L$ and $\mathcal{L}$ have at least one $H$-eigenvalue. Their smallest $H$-eigenvalue equals $2-\rho(L)$, and has a non-negative $H$-eigenvector.
\end{enumerate}
\end{proposition}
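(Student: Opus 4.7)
The overall strategy is to reduce everything to a Perron--Frobenius statement applied to the non-negative tensors on our list. By the remark immediately before Example \ref{ex:123}, each of $A$, $K^+$, $L^+$, $\mathcal{L}^+$ is non-negative. Theorem \ref{minmax:waklyirr} as quoted in the excerpt requires weak irreducibility, which is stronger than what we need here; however, the non-irreducible version of Perron--Frobenius for non-negative tensors (which drops uniqueness and strict positivity of the eigenvector, but keeps the existence statement), found in \cite{QL}, guarantees that $\rho(T)$ is itself an $H$-eigenvalue with a non-negative $H$-eigenvector whenever $T$ is non-negative. I would quote that statement directly.

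Granting this, Part (1) is immediate. For $T\in\{A,K^+,L^+,\mathcal{L}^+\}$ we obtain a non-negative $H$-eigenvector at eigenvalue $\rho(T)$, so $T$ has at least one $H$-eigenvalue. To see that $\rho(T)$ is the \emph{largest} $H$-eigenvalue, I would just observe that any $H$-eigenvalue $\lambda$ is real and hence $\lambda\leq|\lambda|\leq\rho(T)$ by the definition of spectral radius.

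For Part (2) the plan is to transport the conclusion through the affine identities in Remark \ref{rmk2-}. Applying Part (1) to $L^+$ (resp.\ $\mathcal{L}^+$) yields an $H$-eigenpair $(\rho(L^+),\mathbf{x})$ with $\mathbf{x}\geq 0$; by the equivalence
\begin{equation*}
(\lambda,\mathbf{x})\text{ eigenpair for }L^+\iff(2-\lambda,\mathbf{x})\text{ eigenpair for }L
\end{equation*}
recorded in Remark \ref{rmk2-}, the same $\mathbf{x}$ is a non-negative $H$-eigenvector of $L$ at eigenvalue $2-\rho(L^+)$. That this is the \emph{smallest} $H$-eigenvalue of $L$ follows by the same translation in reverse: if $\mu$ is any $H$-eigenvalue of $L$, then $2-\mu$ is an $H$-eigenvalue of $L^+$, so $2-\mu\leq\rho(L^+)$ and hence $\mu\geq 2-\rho(L^+)$. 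The argument for $\mathcal{L}$ is identical, using the second identity of Remark \ref{rmk2-}.

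The main obstacle, and really the only non-mechanical point, is justifying the use of Perron--Frobenius without weak irreducibility: the version stated as Theorem \ref{minmax:waklyirr} is not directly applicable since we are not assuming $G$ is connected or uniform, so we must appeal to the weaker non-negative-tensor version in \cite{QL}. Everything else is a bookkeeping application of the linear identity $L^+=2\id-L$ (and its $\mathcal{L}^+,\mathcal{L}$ analogue).
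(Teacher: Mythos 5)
Your proposal is correct and follows essentially the same route as the paper: part (1) is exactly the paper's appeal to the Perron--Frobenius theorem for (not necessarily weakly irreducible) non-negative tensors in \cite{QL}, and part (2) is the paper's transport of that conclusion through Remark \ref{rmk2-} (the paper additionally invokes Proposition \ref{prop:isospectral} to pass between $L$ and $\mathcal{L}$, which you bypass by applying the identity of Remark \ref{rmk2-} to $\mathcal{L}^+$ directly). Note that what you actually prove is that the smallest $H$-eigenvalue is $2-\rho(L^+)$, which is the correct reading of the statement, since $\rho(L)$ need not equal $\rho(L^+)$ in general (for the triangle graph $\rho(L)=3/2$, yet the smallest $H$-eigenvalue is $0=2-\rho(L^+)$).
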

\begin{proof}
The first claim follows from \cite[Theorem 2.4]{QL}, which applies to non-negative tensors. The second claim follows from the first one, together with Proposition \ref{prop:isospectral} and Remark \ref{rmk2-}. 
\end{proof}

Now we want to prove that our hypergraph  Laplacian  tensors are diagonally dominated. For this purpose we compute the sums of their rows, which is an interesting result in itself. This will also allow to bound their H-eigenvalues.

\begin{lemma}\label{lemma:rowsums}
Given $i_1\in [N]$,
\begin{align*}
 &\sum_{i_2,\ldots,i_\nabla\in [N]} A_{i_1,i_2,\ldots,i_\nabla}=\deg v_{i_1},\\
 &\sum_{i_2,\ldots,i_\nabla\in [N]} K_{i_1,i_2,\ldots,i_\nabla}=\sum_{i_2,\ldots,i_\nabla\in [N]} \mathcal{L}_{i_1,i_2,\ldots,i_\nabla}=0,\\
 &\sum_{i_2,\ldots,i_\nabla\in [N]} K^+_{i_1,i_2,\ldots,i_\nabla}=2\cdot \deg v_{i_1},\\
 &\sum_{i_2,\ldots,i_\nabla\in [N]} \mathcal{L}^+_{i_1,i_2,\ldots,i_\nabla}=2.
\end{align*}

\end{lemma}
\begin{proof}
Given $i_1\in [N]$,
\begin{align*}
   \sum_{i_2,\ldots,i_\nabla\in [N]} A_{i_1,i_2,\ldots,i_\nabla}&= \sum_{r=2}^\nabla\left(\sum_{e\in E_r:\, v_{i_1}\in e}|\text{entries of the row $A_{i_1}$ corresponding to }e|\cdot \frac{w(e)}{N(r,\nabla)}\right)\\
   &=\sum_{r=2}^\nabla\, \sum_{e\in E_r:\, v_{i_1}\in e} w(e)=\deg v_{i_1}.
\end{align*}This implies that 
\begin{equation*}
   \sum_{i_2,\ldots,i_\nabla\in [N]} K_{i_1,i_2,\ldots,i_\nabla} =\deg v_{i_1}- \sum_{i_2,\ldots,i_\nabla\in [N]} A_{i_1,i_2,\ldots,i_\nabla}=0,
\end{equation*}while
\begin{equation*}
    \sum_{i_2,\ldots,i_\nabla\in [N]} \mathcal{L}_{i_1,i_2,\ldots,i_\nabla}=\frac{1}{\deg v_{i_1}}\left(\sum_{i_2,\ldots,i_\nabla\in [N]} K_{i_1,i_2,\ldots,i_\nabla}\right)=0.
\end{equation*}Similarly,
\begin{equation*}
    \sum_{i_2,\ldots,i_\nabla\in [N]} K^+_{i_1,i_2,\ldots,i_\nabla}=\deg v_{i_1}+ \sum_{i_2,\ldots,i_\nabla\in [N]} A_{i_1,i_2,\ldots,i_\nabla}=2\cdot \deg v_{i_1}
\end{equation*}and
\begin{equation*}
    \sum_{i_2,\ldots,i_\nabla\in [N]} \mathcal{L}^+_{i_1,i_2,\ldots,i_\nabla}=\frac{1}{\deg v_{i_1}}\left(\sum_{i_2,\ldots,i_\nabla\in [N]} K^+_{i_1,i_2,\ldots,i_\nabla}\right)=\frac{1}{\deg v_{i_1}}\left(2\cdot \deg v_{i_1}\right)=2.\qedhere
\end{equation*}
\end{proof}

Now we show that hypergraph  Laplacian  tensors are diagonally dominated,  and we prove some bounds for the eigenvalues of all hypergraph tensors eigenvalues which generalize Theorem 3.1 in \cite{Banerjee}, Theorem 4.5(e) in \cite{QL} and Theorem 3.13(ii) in \cite{Banerjee}.
\begin{theorem}
\begin{enumerate}
    \item The tensors $K$, $\mathcal{L}$, $K^+$ and $\mathcal{L}^+$ are diagonally dominated.
 \item If $\lambda$ is an eigenvalue for $K$ or $K^+$, then 
\begin{equation*}
    |\lambda-\Delta|\leq \Delta.
\end{equation*}
If $\mu$ is an eigenvalue for $\mathcal{L}$ or $\mathcal{L}^+$, then
\begin{equation*}
    |\mu-1|\leq 1.
\end{equation*}
If $\nu$ is an eigenvalue for $A$, then
\begin{equation*}
    |\nu|\leq \Delta.
\end{equation*}
In particular, all the $H$-eigenvalues of $K$ and $K^+$ are in  $[0,2\Delta]$,  all the $H$-eigenvalues of $\mathcal{L}$ and $\mathcal{L}^+$ (equivalently, $L$ and $L^+$) are in $[0,2]$, and all the $H$-eigenvalues of $A$ are in $[-\Delta,\Delta]$. \end{enumerate}
\end{theorem}
\begin{proof}\begin{enumerate}
    \item By Lemma \ref{lemma:rowsums}, for each $i\in[N]$ we have
\begin{align*}
    & K_{i,\ldots,i}=\deg v_i= \sum_{\substack{i_2,\ldots,i_\nabla\in [N]\\ \text{not all equal to }i}}\left|K_{i,i_2,\ldots,i_\nabla}\right|;\\
    & K^+_{i,\ldots,i}=\deg v_i= \sum_{\substack{i_2,\ldots,i_\nabla\in [N]\\ \text{not all equal to }i}}\left|K^+_{i,i_2,\ldots,i_\nabla}\right|;\\
     & \mathcal{L}_{i,\ldots,i}=1= \sum_{\substack{i_2,\ldots,i_\nabla\in [N]\\ \text{not all equal to }i}}\left|\mathcal{L}_{i,i_2,\ldots,i_\nabla}\right|;\\
    & \mathcal{L}^+_{i,\ldots,i}=1= \sum_{\substack{i_2,\ldots,i_\nabla\in [N]\\ \text{not all equal to }i}}\left|\mathcal{L}^+_{i,i_2,\ldots,i_\nabla}\right|.
\end{align*}Hence, $K$, $\mathcal{L}$, $K^+$ and $\mathcal{L}^+$ satisfy \eqref{eq:def-dd}, implying that they are diagonally dominated. 
\item  The second claim follows from Equation \eqref{eq:dd}.\end{enumerate}
\end{proof}


Now we move to irreducibility. Generalizing \cite[Theorem 4.1]{QL}, we give a necessary and sufficient condition for our tensors to be weakly irreducible, as defined in Section \ref{sec: intro}.
\begin{theorem}\label{thm:weaklyirr}
$A$, $K$, $K^+$, $L$, $L^+$, $\mathcal{L}$ and $\mathcal{L}^+$ are weakly irreducible tensors if and only if $G$ is connected.
\end{theorem}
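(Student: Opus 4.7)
The plan is to reduce the seven-fold claim to a single statement about the adjacency tensor $A$, and then handle the two directions by a straightforward combinatorial argument using paths.

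\textbf{Reduction to $A$.} I would first note that the weak irreducibility condition only concerns index tuples $(i_1,\dots,i_\nabla)$ with $i_1\in J$ and some $i_j\in [N]\setminus J$, hence tuples whose indices are not all equal. For each of $K$, $K^+$, $L$, $L^+$, $\mathcal{L}$, $\mathcal{L}^+$, the off-diagonal entries are $A_{i_1,\dots,i_\nabla}$ multiplied by a nonzero scalar (either $\pm 1$, or $\pm\prod_{j\in\{i_1,\dots,i_\nabla\}}(\deg v_j)^{-1/\nabla}$, or $\pm(\deg v_{i_1})^{-1}$); these scaling factors are nonzero because, if $G$ is connected (the only case where we need the scaling to be meaningful), every vertex has positive degree. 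So each of the seven tensors has the same off-diagonal zero pattern as $A$, and weak irreducibility of any of them is equivalent to weak irreducibility of $A$.

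\textbf{Sufficiency: $G$ connected $\Rightarrow$ $A$ weakly irreducible.} Fix a proper nonempty $J\subsetneq [N]$, pick $p\in J$ and $q\in [N]\setminus J$, and use connectedness to find a path from $v_p$ to $v_q$. Along this path there must appear an edge $e\in E$ containing at least one vertex $v_a$ with $a\in J$ and at least one vertex $v_b$ with $b\in [N]\setminus J$. Writing $r=|e|\le\nabla$, I would construct an index tuple $(i_1,\dots,i_\nabla)$ by setting $i_1=a$, $i_2=b$, and filling the remaining $\nabla-2$ positions with indices of vertices of $e$ so that every vertex of $e$ occurs at least once; this is possible because $r-2\le\nabla-2$. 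By construction $\{v_{i_1},\dots,v_{i_\nabla}\}=e$ as a set, so $A_{i_1,\dots,i_\nabla}=w(e)/N(r,\nabla)\neq 0$, as required.

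\textbf{Necessity: $G$ disconnected $\Rightarrow$ $A$ not weakly irreducible.} Take $J$ to be the index set of one connected component of $G$, which is a proper nonempty subset of $[N]$. Every edge of $G$ is then contained entirely in $J$ or entirely in $[N]\setminus J$. Hence for any tuple $(i_1,\dots,i_\nabla)$ with $i_1\in J$ and some $i_j\in [N]\setminus J$, the set $\{v_{i_1},\dots,v_{i_\nabla}\}$ cannot be an edge and so $A_{i_1,\dots,i_\nabla}=0$, violating weak irreducibility.

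The only mildly delicate step is the combinatorial padding in the sufficiency direction, namely realizing a fixed edge $e$ as the underlying set of a tuple of length $\nabla$ with the first index in $J$ and a further index outside $J$; this is harmless because $|e|\le\nabla$ leaves enough room. Everything else is a direct unwinding of the definitions of the seven tensors and of connectedness.
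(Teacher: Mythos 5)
Your proof is correct and follows essentially the same route as the paper: reduce all seven tensors to $A$ via the shared off-diagonal zero pattern, then observe that weak irreducibility of $A$ amounts to every proper nonempty vertex subset being crossed by some edge, which is equivalent to connectedness. You simply spell out the details (the path argument and the padding of an edge of size $r\le\nabla$ into a length-$\nabla$ index tuple) that the paper's terser proof leaves implicit.
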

\begin{proof}
Without loss of generality, we only prove the claim for $A$. By definition, the tensor $A$ is weakly irreducible if and only if for any non-empty proper index subset $J$ of $[N]$, there is at least one entry
\begin{equation*}
    A_{i_1,\ldots,i_\nabla}\neq 0,
\end{equation*}where $i_1\in J$ and at least one index $i_j\in [N]\setminus J$, for $j=2,\ldots,\nabla$. By definition of $A$, this happens if and only if, for each non-empty proper subset $J$ of $[N]$, there exist $i_1\in J$ and $i_j\in [N]\setminus J$ such that $v_{i_1}$ and $v_{i_j}$ share a common edge. Hence, $A$ is weakly irreducible if and only if the hypergraph is connected.\end{proof}

Now that weak irreducibility is settled, we address irreducibility.  We introduce a new  combinatorial property of the hypergraph that will allow us to characterize irreducible hypergraph tensors.

\begin{definition}\label{def:red}
A hypergraph $G=(V,E,w)$ is \emph{reducible} if one can decompose the vertex set as a disjoint union $V=V_1\sqcup V_2$ such that $V_1$ and $V_2$ are both non-empty and, for each edge $e$, 
\begin{equation*}
  e\cap V_1\neq \emptyset\Rightarrow  |e\cap V_1|\geq 2.
\end{equation*}
A hypergraph is \emph{irreducible} if it is not reducible.
\end{definition}
\begin{figure}[ht]
    \centering
    \includegraphics[width=8cm]{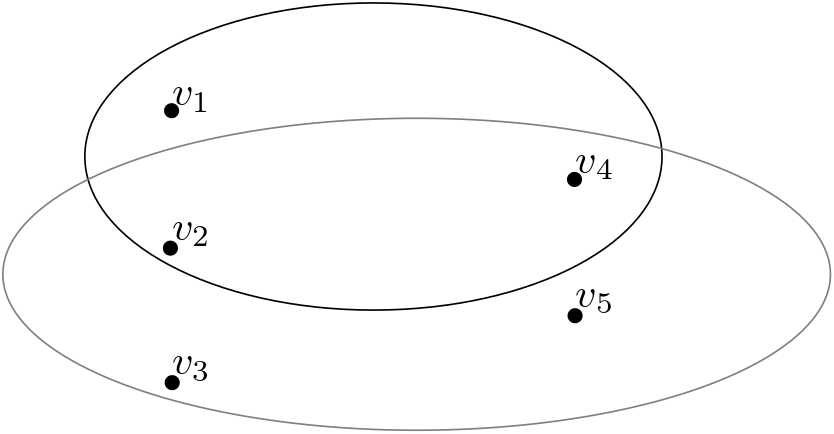}
    \caption{A reducible hypergraph. This can be seen by taking $V_1=\{v_1,v_2,v_3\}$ and $V_2=\{v_4,v_5\}$.}
    \label{figure:red}
\end{figure}

Let us consider some examples:
\begin{itemize}
    \item Every disconnected hypergraph is reducible.
    \item If $G$ has one vertex $v$ that is not contained in any edge of cardinality $2$, then, by setting $V_1=V\setminus \{v\}$ and $V_2=\{v\}$, it is clear that $G$ is reducible. This implies, in particular, that the majority of connected hypergraphs are reducible.
   \item If $G=(V,E)$ and there exists $E'\subseteq E$ such that $G'=(V,E')$ is a connected graph, then $G$ is irreducible. In particular, every connected graph is irreducible.
\end{itemize}

\begin{theorem}
$A$, $K$, $K^+$, $L$, $L^+$, $\mathcal{L}$ and $\mathcal{L}^+$ are irreducible tensors if and only if $G$ is an irreducible hypergraph.
\end{theorem}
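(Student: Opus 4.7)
The plan is to reduce all seven cases to the adjacency tensor $A$ and then translate its tensor-reducibility directly into the combinatorial condition of Definition \ref{def:red}. First I would observe that on any off-diagonal multi-index $(i_1,\ldots,i_\nabla)$ each of $K$, $K^+$, $L$, $L^+$, $\mathcal{L}$, $\mathcal{L}^+$ agrees with $A$ up to a nonzero scalar factor: a sign, possibly times $\prod_{j\in\{i_1,\ldots,i_\nabla\}}1/\sqrt[\nabla]{\deg v_j}$ or $1/\deg v_{i_1}$. These factors are well-defined and strictly positive whenever $A_{i_1,\ldots,i_\nabla}\neq 0$, because then each $v_{i_j}$ sits in some edge and has positive degree. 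So the seven tensors share the same off-diagonal zero pattern. Since tensor-reducibility only constrains entries with $i_1\in J$ and $i_2,\ldots,i_\nabla\in[N]\setminus J$ for a non-empty proper $J$, and any such tuple is automatically off-diagonal (as $i_1\neq i_2$), all seven tensors are simultaneously reducible or irreducible. It therefore suffices to prove the equivalence for $A$.

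For the forward direction I would assume that $G$ is reducible with decomposition $V=V_1\sqcup V_2$ as in Definition \ref{def:red}, and set $J:=\{i\in[N]:v_i\in V_1\}$. For any $i_1\in J$ and $i_2,\ldots,i_\nabla\in[N]\setminus J$ the set $\{v_{i_1},\ldots,v_{i_\nabla}\}$ meets $V_1$ only in $v_{i_1}$; if it were an edge $e$, we would have $|e\cap V_1|=1$, contradicting the reducibility hypothesis. Hence $A_{i_1,\ldots,i_\nabla}=0$ for all such tuples, so $A$ is reducible.

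For the converse I would assume $A$ is reducible with witness $J$, and set $V_1:=\{v_i:i\in J\}$, $V_2:=V\setminus V_1$, both non-empty. I need to show that every edge either avoids $V_1$ or meets it in at least two vertices. Suppose towards a contradiction that some edge $e\in E$ satisfies $e\cap V_1=\{v_{i_1}\}$; write the remaining vertices of $e$ as $v_{j_2},\ldots,v_{j_r}$ with $r=|e|\geq 2$. I would then build the tuple $(i_1,j_2,\ldots,j_r,j_r,\ldots,j_r)$ of length $\nabla$ by padding with copies of $j_r$. Its underlying vertex set equals $e$, while $i_1\in J$ and $j_2,\ldots,j_r\in[N]\setminus J$, so $A_{i_1,j_2,\ldots,j_r,\ldots,j_r}=w(e)/N(r,\nabla)>0$, contradicting the reducibility witness $J$.

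The only mild obstacle is the padding step in the converse, which requires at least one vertex of $e$ to lie outside $V_1$ so that the slots $i_2,\ldots,i_\nabla$ can be filled with elements of $[N]\setminus J$. This is guaranteed by the standing assumption $|e|\geq 2$ combined with $|e\cap V_1|=1$, and is the only point in the argument where that assumption is used.
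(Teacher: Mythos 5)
Your proof is correct and takes essentially the same route as the paper's, which uses the identical correspondence $J\leftrightarrow V_1$ in both directions. The paper's own proof is only a two-sentence sketch, so your reduction to $A$ via the shared off-diagonal zero pattern and the padding argument in the converse supply precisely the details it leaves implicit.
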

\begin{proof}
If $G$ is reducible, let $V=V_1\sqcup V_2$ be a decomposition of the vertex set as in Definition \ref{def:red}. Then, setting $J=\{i\in [N]:v_i\in V_1\}$ shows that the tensors associated to $G$ are reducible. Vice versa, if $J\subset [N]$ shows that the tensors associated to $G$ are reducible, then setting $V_1=\{v_i\in V:i\in J\}$ and $V_2=V\setminus V_1$ shows that $G$ is a reducible hypergraph.
\end{proof}

Another important property that we want to understand better is the spectral radius.  As a generalization of \cite[Theorem 4.4]{QL}, which is stated for the unweighted, $\nabla$-uniform case,  we are able to give tight bounds on $\rho(A)$ and $\rho(K^+)$ and to compute $\rho(L^+)$ and $\rho(\LL^+)$ for a weighted hypergraph.

\begin{proposition}The spectral radii of $A$ and $K^+$ satisfy
\begin{equation*}
  \delta\leq \rho(A)\leq \Delta\mbox{ and }
  2\delta \leq \rho(K^+)\leq 2 \Delta.
\end{equation*}
In particular, if $G$ is $\Delta$-regular, then
\begin{equation*}
    \rho(A)= \Delta \text{ and } \rho(K^+)= 2\Delta.
\end{equation*}
Moreover, if $G'=(V,E',w')$ is another weighted hypergraph on $N$ vertices and maximum edge cardinality $\nabla$, and it is obtained from $G$ by removing edges or by decreasing some edge weights, then
\begin{equation*}
    \rho(A(G'))\leq \rho(A(G))  \text{ and } \rho(K(G'))\leq \rho(K(G)).
\end{equation*}
\end{proposition}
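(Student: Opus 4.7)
The plan is to derive all parts of the proposition from the Perron-Frobenius theorem for weakly irreducible non-negative tensors (Theorem~\ref{minmax:waklyirr}) combined with the explicit row-sum formulas of Theorem~\ref{thm:rowsums}, which give $(A\mathbf{1}^{\nabla-1})_i = \deg v_i$ and $(K^+\mathbf{1}^{\nabla-1})_i = 2\deg v_i$ when $\mathbf{1}$ is the all-ones vector.

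Assume first that $G$ is connected, so that by Theorem~\ref{thm:weaklyirr} the tensors $A$ and $K^+$ are weakly irreducible and non-negative. Substituting $\mathbf{x} = \mathbf{1}$ into the max-min characterization of Theorem~\ref{minmax:waklyirr} immediately gives the lower bounds $\rho(A) \geq \delta$ and $\rho(K^+) \geq 2\delta$. For the upper bounds I would let $\bar{\mathbf{x}}$ be the positive Perron H-eigenvector of $A$ provided by Theorem~\ref{minmax:waklyirr}, and choose $i^\ast$ maximizing $\bar{x}_{i^\ast}$; from the eigenvalue equation
\[
\rho(A)\,\bar{x}_{i^\ast}^{\nabla-1} = \sum_{i_2,\ldots,i_\nabla} A_{i^\ast,i_2,\ldots,i_\nabla}\,\bar{x}_{i_2}\cdots \bar{x}_{i_\nabla},
\]
bounding each $\bar{x}_{i_j}$ by $\bar{x}_{i^\ast}$ and factoring out yields $\rho(A) \leq \sum A_{i^\ast,i_2,\ldots,i_\nabla} = \deg v_{i^\ast} \leq \Delta$. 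The identical argument applied to $K^+$ produces $\rho(K^+) \leq 2\Delta$. If $G$ is disconnected, the tensors split as direct sums along the connected components, the spectral radius is the maximum over components, and applying the connected case to each component preserves the same sandwich, since every component $G_c$ satisfies $\delta \leq \delta_c$ and $\Delta_c \leq \Delta$.

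If $G$ is $\Delta$-regular, then $\delta = \Delta$, the sandwich bounds collapse, and we obtain $\rho(A) = \Delta$ and $\rho(K^+) = 2\Delta$. One can also observe directly that the all-ones vector is then an H-eigenvector of $A$ with eigenvalue $\Delta$ and of $K^+$ with eigenvalue $2\Delta$, and combine this with the upper bounds just proved.

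For the monotonicity claim, if $G'$ is obtained from $G$ by removing edges or by decreasing weights, then every entry of $A(G')$ is at most the corresponding entry of $A(G)$, and the same holds for $K^+$. Taking a positive Perron H-eigenvector $\tilde{\mathbf{x}}$ of $A(G')$ (first restricting to a weakly irreducible component, if necessary), one has $(A(G')\tilde{\mathbf{x}}^{\nabla-1})_i \leq (A(G)\tilde{\mathbf{x}}^{\nabla-1})_i$ for every $i$, so that $\tilde{\mathbf{x}}$ witnesses
\[
\rho(A(G')) = \min_i \frac{(A(G')\tilde{\mathbf{x}}^{\nabla-1})_i}{\tilde{x}_i^{\nabla-1}} \leq \min_i \frac{(A(G)\tilde{\mathbf{x}}^{\nabla-1})_i}{\tilde{x}_i^{\nabla-1}} \leq \rho(A(G))
\]
via Theorem~\ref{minmax:waklyirr} applied to $A(G)$. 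The same argument handles $K^+$. The main technical care lies in the disconnected case and in ensuring weak irreducibility before invoking Perron-Frobenius; I also note that the final displayed inequality writes $\rho(K)$ rather than $\rho(K^+)$, which appears to be a typo, since $K$ is not non-negative and the monotonicity of its spectral radius under edge deletion or weight decrease does not follow from the argument above.
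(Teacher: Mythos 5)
Your proof is correct, but it takes a genuinely different route from the paper's. The paper disposes of the sandwich bounds in two lines by quoting \cite[Lemma 3.20]{QL}, which asserts $\min_{i_1}\sum_{i_2,\ldots,i_\nabla}T_{i_1,i_2,\ldots,i_\nabla}\leq\rho(T)\leq\max_{i_1}\sum_{i_2,\ldots,i_\nabla}T_{i_1,i_2,\ldots,i_\nabla}$ for \emph{any} non-negative tensor, with no irreducibility hypothesis, and then plugs in the row sums from Theorem \ref{thm:rowsums}; the monotonicity claim is likewise outsourced to \cite[Lemma 2.3]{rho}, the entrywise monotonicity of the spectral radius for non-negative tensors. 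You instead re-derive both of these facts from the Perron--Frobenius max--min characterization (Theorem \ref{minmax:waklyirr}): the test vector $\mathbf{1}$ for the lower bound, the maximal coordinate of the Perron vector for the upper bound, and the Perron vector of $A(G')$ as a test vector for $A(G)$ for the monotonicity. This is a perfectly sound and essentially standard argument, and it has the merit of being self-contained relative to the one theorem the paper actually states in full; the price is that Theorem \ref{minmax:waklyirr} requires weak irreducibility, so you are forced into a reduction to connected components (via the disjoint-union theorem) that the paper's route avoids entirely. That reduction is only sketched in your write-up, and in the monotonicity step, where both $G$ and $G'$ may be disconnected and $G'$ typically is after edge removal, it needs a little more bookkeeping than you give it (match each component of $G'$ to the component of $G$ containing it, extend the Perron vector by zero, and use that the $\min$ in the max--min formula runs only over the positive coordinates) --- but it does go through. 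Two further points in your favour: you correctly observe that only regularity, not uniformity, is used in the ``in particular'' clause; and your diagnosis that the final inequality should read $\rho(K^+(G'))\leq\rho(K^+(G))$ is right --- the paper's own proof establishes entrywise domination for $K^+$, and the cited monotonicity lemma cannot apply to $K$, which has negative entries.
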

\begin{proof}
If $T$ is a non-negative tensor, then by \cite[Lemma 3.20]{QL},
\begin{equation*}
   \min_{i_1\in [N]}\sum_{i_2,\ldots,i_\nabla\in [N]} T_{i_1,i_2,\ldots,i_\nabla} \leq \rho(T)\leq \max_{i_1\in [N]}\sum_{i_2,\ldots,i_\nabla\in [N]} T_{i_1,i_2,\ldots,i_\nabla}.
\end{equation*}
By Lemma \ref{lemma:rowsums}, this implies that 
  $\delta\leq \rho(A)\leq \Delta$ and
 $2\delta \leq \rho(K^+)\leq 2 \Delta$. Moreover, if $G'=(V,E',w')$ is another weighted hypergraph on $N$ vertices and maximum edge cardinality $\nabla$, and it is obtained from $G$ by removing edges or by decreasing some edge weights, then
\begin{equation*}
    A(G')_{i_1,\ldots,i_\nabla}\leq A_{i_1,\ldots,i_\nabla}  \text{ and } K^+(G')_{i_1,\ldots,i_\nabla}\leq K^+_{i_1,\ldots,i_\nabla},
\end{equation*}for each $i_j\in [N]$ and $j\in [\nabla]$. By \cite[Lemma 2.3]{rho}, this implies that
\begin{equation*}
    \rho(A(G'))\leq \rho(A)  \text{ and } \rho(K(G'))\leq \rho(K).\qedhere
\end{equation*}
\end{proof}

\begin{proposition}\label{prop_radiusL}
The spectral radius of $L^+$ and $\mathcal{L}^+$ is $\rho(L^+)=\rho(\mathcal{L}^+)=2$.
\end{proposition}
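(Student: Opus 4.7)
The plan is to use the all-ones vector as an explicit $H$-eigenvector witnessing the eigenvalue $2$, and then invoke the diagonal dominance bound to show no eigenvalue can exceed $2$ in modulus.

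First I would reduce to the tensor $\mathcal{L}^+$. By Proposition \ref{prop:isospectral}, $L^+$ and $\mathcal{L}^+$ are isospectral, so it suffices to prove $\rho(\mathcal{L}^+)=2$. Next, set $\mathbf{x}=(1,\dots,1)\in\mathbb{R}^N$. For each $i_1\in[N]$, using Theorem \ref{thm:rowsums},
\begin{equation*}
(\mathcal{L}^+\mathbf{x}^{\nabla-1})_{i_1}=\sum_{i_2,\ldots,i_\nabla\in[N]}\mathcal{L}^+_{i_1,i_2,\ldots,i_\nabla}=2=2\cdot x_{i_1}^{\nabla-1},
\end{equation*}
so $(2,\mathbf{x})$ is an eigenpair, and in particular $2$ is an $H$-eigenvalue of $\mathcal{L}^+$ (with the positive eigenvector $\mathbf{x}$). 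Consequently $\rho(\mathcal{L}^+)\geq 2$.

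For the reverse inequality, recall that $\mathcal{L}^+$ is non-negative. By Proposition \ref{prop:Heigenvalues}(1), its largest $H$-eigenvalue coincides with $\rho(\mathcal{L}^+)$; call this $H$-eigenvalue $\mu$. By Corollary \ref{cor:LKdd}, since $\mathcal{L}^+$ is diagonally dominated, every $H$-eigenvalue $\mu$ satisfies $|\mu-1|\leq 1$, hence $\mu\in[0,2]$. Therefore $\rho(\mathcal{L}^+)=\mu\leq 2$, and combined with the previous inequality we obtain $\rho(\mathcal{L}^+)=2$. Finally $\rho(L^+)=\rho(\mathcal{L}^+)=2$ by Proposition \ref{prop:isospectral}.

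There is no serious obstacle: the only thing to be careful about is that for a general tensor $|\mu-1|\leq 1$ applies only to $H$-eigenvalues, not to arbitrary complex eigenvalues, so the argument crucially uses Proposition \ref{prop:Heigenvalues} to identify the spectral radius itself as an $H$-eigenvalue before applying the diagonal-dominance bound.
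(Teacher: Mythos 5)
Your proof is correct, but it follows a different path from the paper's. The paper obtains both inequalities in one stroke: since $\mathcal{L}^+$ is non-negative, \cite[Lemma 3.20]{QL} sandwiches $\rho(\mathcal{L}^+)$ between the minimum and the maximum row sums, and Theorem \ref{thm:rowsums} says every row sum equals $2$, so $\rho(\mathcal{L}^+)=2$ immediately. You instead split the two bounds: the lower bound via the explicit eigenpair $(2,(1,\dots,1))$ (which is essentially the content of the later Corollary \ref{cor:2eigenvalue}, proved here directly with no circularity), and the upper bound via diagonal dominance. Both routes ultimately rest on the same row-sum computation, so the difference is mostly one of packaging; your version has the small advantage of exhibiting an explicit eigenvector, while the paper's is shorter. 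One inaccuracy in your closing remark: the bound $|\mu-1|\leq 1$ in Corollary \ref{cor:LKdd} comes from the Gershgorin-type estimate \eqref{eq:dd} of \cite[Theorem 2]{Song}, which applies to \emph{all} complex eigenvalues of a diagonally dominated tensor, not only to $H$-eigenvalues (only the ``in particular'' conclusion that the eigenvalues lie in $[0,2]$ requires realness). So the detour through Proposition \ref{prop:Heigenvalues} to identify $\rho(\mathcal{L}^+)$ as an $H$-eigenvalue is harmless but unnecessary: $|\mu-1|\leq 1$ already gives $|\mu|\leq 2$ for every eigenvalue, hence $\rho(\mathcal{L}^+)\leq 2$ directly.
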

\begin{proof}
Since $L^+$ and $\mathcal{L}^+$ are isospectral, they have the same spectral radius. Now, since $\mathcal{L}$ is a non-negative tensor, by \cite[Lemma 3.20]{QL},
\begin{equation*}
   \min_{i_1\in [N]}\sum_{i_2,\ldots,i_\nabla\in [N]} \mathcal{L}^+_{i_1,i_2,\ldots,i_\nabla} \leq \rho(\mathcal{L}^+)\leq \max_{i_1\in [N]}\sum_{i_2,\ldots,i_\nabla\in [N]} \mathcal{L}^+_{i_1,i_2,\ldots,i_\nabla}.
\end{equation*}By Lemma \ref{lemma:rowsums}, the claim follows.
\end{proof}

Thanks to Proposition \ref{prop_radiusL}, we can derive more information on the H-eigenvalues and H-eigenvectors.

\begin{corollary}\label{cor:2eigenvalue}
If $G$ is connected, then $2$ is an eigenvalue for $L^+$ and $\mathcal{L}^+$, and $(1,\ldots,1)\in \mathbb{R}^N$ is the unique positive $H$-eigenvector of $\mathcal{L}^+$ associated to $2$, up to a multiplicative constant.
\end{corollary}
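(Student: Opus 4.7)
The plan is to verify directly that the all-ones vector gives an eigenpair for $\mathcal{L}^+$ with eigenvalue $2$, transport this to $L^+$ via the isospectrality already established, and then invoke the Perron-Frobenius theorem for weakly irreducible non-negative tensors to get uniqueness.

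First I would take $\mathbf{x} = (1,\ldots,1)\in\mathbb{R}^N$ and compute
\begin{equation*}
(\mathcal{L}^+\mathbf{x}^{\nabla-1})_{i_1}=\sum_{i_2,\ldots,i_\nabla\in[N]}\mathcal{L}^+_{i_1,i_2,\ldots,i_\nabla}\cdot 1\cdots 1 = 2 = 2\cdot x_{i_1}^{\nabla-1},
\end{equation*}
where the middle equality uses the row-sum computation from Theorem \ref{thm:rowsums}. Hence $(2,\mathbf{x})$ is an eigenpair of $\mathcal{L}^+$, so $2$ is an eigenvalue of $\mathcal{L}^+$. For $L^+$, the simplest route is to apply Proposition \ref{prop:isospectral} (signless version): since $\mathcal{L}^+$ and $L^+$ share the same spectrum, $2$ is an eigenvalue of $L^+$ as well. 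Explicitly, the vector $\mathbf{y}\in\mathbb{R}^N$ with $y_j=\sqrt[\nabla]{\deg v_j}$ is an $H$-eigenvector of $L^+$ for $2$.

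For the uniqueness of the positive $H$-eigenvector of $\mathcal{L}^+$ associated to $2$, I would combine three ingredients already proved in the paper: (i) $\mathcal{L}^+$ is non-negative (see the remark at the end of Section \ref{section:hyp}); (ii) by Theorem \ref{thm:weaklyirr}, the connectedness of $G$ implies that $\mathcal{L}^+$ is weakly irreducible; and (iii) by Proposition \ref{prop_radiusL}, $\rho(\mathcal{L}^+)=2$. With these in hand, Theorem \ref{minmax:waklyirr} applies: $\rho(\mathcal{L}^+)=2$ is the \emph{unique} $H$-eigenvalue of $\mathcal{L}^+$ admitting a positive $H$-eigenvector, and this positive $H$-eigenvector is unique up to scaling. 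Since $(1,\ldots,1)$ is a positive $H$-eigenvector for the eigenvalue $2$, it must be the unique one up to a multiplicative constant.

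There is no serious obstacle: every step is an immediate consequence of results stated earlier in the paper. The only mild subtlety is making sure the hypotheses of Theorem \ref{minmax:waklyirr} are in place, which is why connectedness (through Theorem \ref{thm:weaklyirr}) is crucial to invoke weak irreducibility of $\mathcal{L}^+$.
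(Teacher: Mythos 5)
Your proof is correct and follows essentially the same route as the paper: exhibit $(1,\ldots,1)$ as an eigenvector of $\mathcal{L}^+$ for the eigenvalue $2$, transfer the statement to $L^+$ via Proposition \ref{prop:isospectral}, and invoke Theorem \ref{minmax:waklyirr} for the uniqueness of the positive $H$-eigenvector. The only cosmetic difference is that the paper deduces that $2$ is an eigenvalue from Propositions \ref{prop:Heigenvalues} and \ref{prop_radiusL} rather than from the row-sum computation of Theorem \ref{thm:rowsums}, and it leaves implicit the verification of the hypotheses of Theorem \ref{minmax:waklyirr} (non-negativity and weak irreducibility via Theorem \ref{thm:weaklyirr}), which you usefully spell out.
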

\begin{proof}
The fact that $2$ is always an eigenvalue for $L^+$ and $\mathcal{L}^+$ follows by Proposition \ref{prop:Heigenvalues} and Proposition \ref{prop_radiusL}. Moreover, it is easy to check that a corresponding eigenvector for $\mathcal{L}^+$ is $(1,\ldots,1)\in \mathbb{R}^N$. By Theorem \ref{minmax:waklyirr}, the claim follows.
\end{proof}

The following corollary generalizes Theorems 3.9(i) and 3.13(iv) in \cite{Banerjee}.

\begin{corollary}
If $G$ is connected, then $0$ is an eigenvalue for $K$, $L$ and $\mathcal{L}$, and $(1,\ldots,1)\in \mathbb{R}^N$ is the unique positive $H$-eigenvector of $\mathcal{L}$ associated to $0$, up to a multiplicative constant.
\end{corollary}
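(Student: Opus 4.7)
The plan is to reduce everything to statements already established for $\mathcal{L}$ and $\mathcal{L}^+$, since $\mathcal{L}$ is neither non-negative nor does Perron--Frobenius apply to it directly. First I would verify that $(1,\ldots,1)$ is an $H$-eigenvector of $\mathcal{L}$ with eigenvalue $0$. This is immediate from Theorem \ref{thm:rowsums}: taking $\mathbf{x}=\mathbf{1}$ we have $x_{i_2}\cdots x_{i_\nabla}=1$ and $x_i^{\nabla-1}=1$, so
\begin{equation*}
(\mathcal{L}\mathbf{1}^{\nabla-1})_{i_1}=\sum_{i_2,\ldots,i_\nabla\in[N]}\mathcal{L}_{i_1,i_2,\ldots,i_\nabla}=0
\end{equation*}
for every $i_1$. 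The same row-sum argument gives $(K\mathbf{1}^{\nabla-1})_{i_1}=0$, so $0$ is an eigenvalue of $K$ as well, with the same eigenvector. For $L$, I would then invoke Proposition \ref{prop:isospectral}: since $L$ and $\mathcal{L}$ are isospectral, $0$ is an eigenvalue of $L$ too (with corresponding eigenvector having $j$-th entry $\sqrt[\nabla]{\deg v_j}$).

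For the uniqueness of the positive $H$-eigenvector of $\mathcal{L}$ for the eigenvalue $0$, the idea is to transfer uniqueness from the analogous statement for $\mathcal{L}^+$, which is exactly Corollary \ref{cor:2eigenvalue}. By Remark \ref{rmk2-}, $(\lambda,\mathbf{x})$ is an eigenpair for $\mathcal{L}$ if and only if $(2-\lambda,\mathbf{x})$ is an eigenpair for $\mathcal{L}^+$; in particular the positive $H$-eigenvectors of $\mathcal{L}$ associated to $0$ are exactly the positive $H$-eigenvectors of $\mathcal{L}^+$ associated to $2$. Since $G$ is connected, Corollary \ref{cor:2eigenvalue} identifies that set, up to scalar, with $\{(1,\ldots,1)\}$, and the result follows.

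I do not expect a serious obstacle here: the combinatorial content (row sums, connectedness $\Leftrightarrow$ weak irreducibility, and the Perron--Frobenius-type uniqueness) has already been packaged into Theorem \ref{thm:rowsums}, Remark \ref{rmk2-}, Proposition \ref{prop:isospectral}, and Corollary \ref{cor:2eigenvalue}. The only mildly subtle point is remembering that Perron--Frobenius cannot be invoked on $\mathcal{L}$ itself (it has negative off-diagonal entries), which forces the detour through its signless counterpart $\mathcal{L}^+$.
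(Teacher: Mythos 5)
Your proposal is correct and follows essentially the same route as the paper, whose entire proof is ``It follows from Remark \ref{rmk2-} and Corollary \ref{cor:2eigenvalue}'': you transfer the existence and Perron--Frobenius uniqueness statement from $\mathcal{L}^+$ at eigenvalue $2$ back to $\mathcal{L}$ at eigenvalue $0$ via the relation $\mathcal{L}^+=2\cdot\id-\mathcal{L}$, exactly as intended. Your extra verifications via Theorem \ref{thm:rowsums} and the explicit eigenvector $x_j=\sqrt[\nabla]{\deg v_j}$ for $L$ are just the details the paper leaves implicit.
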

\begin{proof}
It follows from Remark \ref{rmk2-} and Corollary \ref{cor:2eigenvalue}.
\end{proof}

The last two results concerned connected hypergraphs. From our viewpoint, it is not very restrictive to assume that the hypergraph is indeed connected. If this is not the case, we can study the spectrum of a hypergraph from the spectra of its connected components. The following theorem generalizes \cite[Theorem 3.15]{Banerjee}.
\begin{theorem}
Given two hypergraphs $G_1=(V_1,E_1,w_1)$ and $G_2=(V_2,E_2,w_2)$, let $G:=G_1\sqcup G_2=(V_1\sqcup V_2, E_1\sqcup E_2,w)$, where $w|_{E_i}:=w_i$ for $i=1,2$. Let also $T\in \{A,K,K^+,L,L^+,\mathcal{L},\mathcal{L}^+\}$. Then the eigenvalues of $T(G)$ are precisely the eigenvalues of $T(G_1)$ together with the eigenvalues of $T(G_2)$. Moreover, an eigenvalue that has algebraic multiplicity $m$ for $T(G_1)$ has algebraic multiplicity $m(\nabla-1)^{|V_2|}$ for $T(G)$.
\end{theorem}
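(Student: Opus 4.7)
The plan is to exploit the block structure of $T(G)$ that comes from the disjoint union. The key observation is that, because no edge of $G$ meets both $V_1$ and $V_2$, the definitions in Section~\ref{section:hyp} force $T(G)_{i_1,\ldots,i_\nabla}=0$ whenever the multiset of indices involves vertices from both sides: on the diagonal this is trivial, while off-diagonal entries are scalar multiples of $A(G)_{i_1,\ldots,i_\nabla}$, which vanishes unless $\{v_{i_1},\ldots,v_{i_\nabla}\}$ is an edge, hence entirely contained in $V_1$ or in $V_2$. After reordering so that $V_1$ precedes $V_2$, this means $T(G)$ is the \emph{direct sum} $T(G_1)\oplus T(G_2)$ in the tensor sense.

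Writing $\mathbf{x}\in\C^{|V_1|+|V_2|}$ as $(\mathbf{y},\mathbf{z})$ with $\mathbf{y}$ supported on $V_1$, the block structure decouples the eigenvalue equation $T(G)\mathbf{x}^{\nabla-1}=\lambda\mathbf{x}^{[\nabla-1]}$ into two independent systems
\begin{equation*}
T(G_1)\mathbf{y}^{\nabla-1}=\lambda\mathbf{y}^{[\nabla-1]}, \qquad T(G_2)\mathbf{z}^{\nabla-1}=\lambda\mathbf{z}^{[\nabla-1]}.
\end{equation*}
This already proves the set-level statement: padding an eigenvector of $T(G_i)$ by zeros gives an eigenvector of $T(G)$ for the same eigenvalue, while conversely any eigenpair of $T(G)$ has at least one nonzero block, which is an eigenvector of the corresponding $T(G_i)$.

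For the algebraic multiplicities I would invoke the standard factorization of the characteristic polynomial of a direct sum of two $\nabla$-th order tensors: if $T=T_1\oplus T_2$ with $T_i$ of dimension $n_i$, then
\begin{equation*}
\varphi_{T}(\lambda)=\varphi_{T_1}(\lambda)^{(\nabla-1)^{n_2}}\cdot \varphi_{T_2}(\lambda)^{(\nabla-1)^{n_1}},
\end{equation*}
an identity of Shao's consistent with the degree count $(n_1+n_2)(\nabla-1)^{n_1+n_2-1}$ on both sides. Applied with $n_1=|V_1|$ and $n_2=|V_2|$, this immediately reads off multiplicity $m\cdot(\nabla-1)^{|V_2|}$ in $T(G)$ for any eigenvalue of $T(G_1)$ of multiplicity $m$. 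The decoupling is elementary; the real work sits in this polynomial identity, which, unlike its matrix analogue, is not a consequence of Laplace expansion but must be justified via resultants, and this is the main technical hurdle of the proof.
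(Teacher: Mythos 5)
Your argument is correct and is essentially the paper's: the published proof is a one-line appeal to Shao's Corollary 4.2, which is exactly the characteristic-polynomial factorization $\varphi_{T_1\oplus T_2}(\lambda)=\varphi_{T_1}(\lambda)^{(\nabla-1)^{n_2}}\cdot\varphi_{T_2}(\lambda)^{(\nabla-1)^{n_1}}$ that you correctly identify as the one genuinely nontrivial ingredient. Your preliminary verification that $T(G)$ is a direct sum and your decoupling of the eigenvalue equation simply make explicit why that cited result applies, so the two proofs coincide in substance.
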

\begin{proof}
This is an immediate consequence of \cite[Corollary 4.2]{Shao}.
\end{proof}

\section{Duplicate vertices}\label{section: duplicate}
In this section we discuss \emph{duplicate vertices}, i.e.\ vertices that do not share common edges but are structurally equivalent. It is known that, in the case of graphs, duplicate vertices leave a signature in the spectra of the operators. Also, such vertices are important in applied network theory because their presence is important for the study of the \emph{network redundancy} and \emph{robustness}, as discussed in \cite{symm}.

\begin{definition}
Two vertices $v_i$ and $v_j$ are \emph{duplicate} if they do not share common edges and the corresponding rows of the adjacency tensor are the same, that is,
\begin{equation*}
    A_{i,i_2,\ldots,i_\nabla}=A_{j,i_2,\ldots,i_\nabla}\text{ for every  } i_2,\ldots,i_\nabla\in[N].
\end{equation*}
\end{definition}
Note that the above definition of duplicate vertices, that we first introduced here,  does not coincide with \cite[Definition 3.2]{MulasZhang}, which is based on the hypergraph adjacency matrix. However, both these definitions coincide with the classical one in the case of simple graphs.  It follows from the definition that if $v_i$ is a vertex which has a duplicate $v_j$, then $v_i$ is not contained in any edge of cardinality smaller than $\nabla$. Indeed, if $\nabla=2$, this is trivially true. If $\nabla\geq 3$, the existence of such an edge would imply that $A_{i,i,i_3,\dots,i_{\nabla}}\neq 0$, for some $\{i_3,\dots,i_{\nabla}\}\subset [N]$. But as $v_j$ is a duplicate, then $A_{j,i,i_3,\dots,i_{\nabla}}\neq 0$, which contradicts the fact that $v_i$ and $v_j$ are not contained in a common edge.\newline
The following theorem and proposition are well known for simple graphs, but they are new for general hypergraphs also in the unweighted, uniform case.

\begin{theorem}\label{thm:duplicate}
Assume that there are $n$ vertices which are duplicate of each other.
\begin{enumerate}
\item If $\nabla=2$, then
\begin{itemize}
    \item $0$ is an eigenvalue for $A$ with multiplicity at least $n-1$;
    \item $1$ is an eigenvalue for $L$, $L^+$, $\mathcal{L}$ and $\mathcal{L}^+$, with multiplicity at least $n-1$. 
\end{itemize}
\item If $\nabla\ge 3$, then
\begin{itemize}
    \item $0$ is an eigenvalue for $A$ with Hspan multiplicity at least $n$ and geometric multiplicity at least $n$;
    \item $1$ is an eigenvalue for $L$, $L^+$, $\mathcal{L}$ and $\mathcal{L}^+$, with Hspan multiplicity at least $n$ and geometric multiplicity at least $n$.
\end{itemize}
\end{enumerate}
\end{theorem}
\begin{proof}We only prove the claims for $A$, as the other claims then follow from Remark \ref{rmk2-}.  Assume that $v_1,\ldots,v_n$ are duplicate vertices.\newline
If $\nabla = 2$, then $A$ is a symmetric matrix. Therefore, as recalled in the introduction, each eigenvalue is an $H$-eigenvalue, and all multiplicities coincide. We observe that any non-zero vector $\mathbf{x}\in \mathbb{R}^N$ satisfying $x_i=0$ for $i>n$ and $\sum_{i=1}^n x_i =0$ is an eigenvector with eigenvalue $0$. In particular, the $n-1$ linearly independent vectors $e_1-e_j$, for $2\leq j\leq n$, where $e_i$ is the $i$-th vector of the canonical basis of $\mathbb{R}^N$, are eigenvectors of $0$ for $A$.\newline
If $\nabla\ge 3$, we have already observed that $v_i$ is not contained in any edge of cardinality smaller than $\nabla$. This implies that every monomial in the equations $(A\mathbf{x}^{\nabla-1})_{i}$ with $i>n$ is divisible by at least one variable $x_j$, with $j>n$. 
This means that the eigenvariety contains the $n$-dimensional linear space defined by the equations
\[x_{n+1}=\ldots=x_N=0,
\]
therefore $\gm(0)\ge n$.
In particular, $e_1,\ldots,e_n\in \mathbb{R}^N$ are eigenvectors with eigenvalue $0$, so $\Hsm(0)\ge n$.
\end{proof}

\begin{proposition}\label{prop:duplicate}
Let $v_i$ and $v_j$ be duplicate vertices.
\begin{itemize}
    \item If $(\lambda,\mathbf{x})$ is an eigenpair for $A$ and $\lambda\neq 0$, then $x_i^{\nabla-1}=x_j^{\nabla-1}$.
    \item If $(\mu,\mathbf{y})$ is an eigenpair for $L$, $L^+$, $\mathcal{L}$ or $\mathcal{L}^+$ and $\mu\neq 1$, then $y_i^{\nabla-1}=y_j^{\nabla-1}$.
    \item  If $(\nu,\mathbf{z})$ is an eigenpair for $K$ or $K^+$ and $\nu\neq \deg v_i$, then $z_i^{\nabla-1}=z_j^{\nabla-1}$. 
\end{itemize}
\end{proposition}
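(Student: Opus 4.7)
The plan is to compare the $i$-th and $j$-th components of the eigenvalue equation and exploit the row equality $A_{i,i_2,\ldots,i_\nabla}=A_{j,i_2,\ldots,i_\nabla}$ to cancel most of the terms. For the adjacency tensor the argument is immediate: the hypothesis gives $(A\mathbf{x}^{\nabla-1})_i=(A\mathbf{x}^{\nabla-1})_j$ term by term, so the eigenvalue equation forces $\lambda(x_i^{\nabla-1}-x_j^{\nabla-1})=0$, and dividing by $\lambda\neq 0$ yields the first bullet.

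For the Laplacian tensors I would first record the auxiliary fact $\deg v_i=\deg v_j$, which follows from Theorem \ref{thm:rowsums} since both degrees equal the sums over the two (equal) rows of $A$. I would then compare $(\mathcal{L}\mathbf{y}^{\nabla-1})_i$ with $(\mathcal{L}\mathbf{y}^{\nabla-1})_j$ tuple by tuple. For every $(i_2,\ldots,i_\nabla)\notin\{(i,\ldots,i),(j,\ldots,j)\}$ both coefficients $\mathcal{L}_{i,i_2,\ldots,i_\nabla}$ and $\mathcal{L}_{j,i_2,\ldots,i_\nabla}$ are off-diagonal and coincide, equal to $-A_{i,i_2,\ldots,i_\nabla}/\deg v_i=-A_{j,i_2,\ldots,i_\nabla}/\deg v_j$ (possibly both zero). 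The two exceptional tuples produce only the diagonal contributions $y_i^{\nabla-1}$ and $y_j^{\nabla-1}$ respectively, since the off-diagonal entries $\mathcal{L}_{i,j,\ldots,j}$ and $\mathcal{L}_{j,i,\ldots,i}$ vanish thanks to $v_i$ and $v_j$ sharing no edge. Altogether
\[
(\mathcal{L}\mathbf{y}^{\nabla-1})_i-(\mathcal{L}\mathbf{y}^{\nabla-1})_j=y_i^{\nabla-1}-y_j^{\nabla-1},
\]
and the eigenvalue equation then gives $(\mu-1)(y_i^{\nabla-1}-y_j^{\nabla-1})=0$, proving the claim for $\mathcal{L}$ whenever $\mu\neq 1$. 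The signless tensor $\mathcal{L}^+$ is handled identically, since its off-diagonal entries differ from those of $\mathcal{L}$ only by a sign.

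To transfer the conclusion from $\mathcal{L}$ to $L$, I would apply Proposition \ref{prop:isospectral}: an eigenpair $(\mu,\mathbf{x})$ of $L$ corresponds to an eigenpair $(\mu,\mathbf{y})$ of $\mathcal{L}$ via $y_k=x_k/\sqrt[\nabla]{\deg v_k}$, and the case of $\mathcal{L}$ gives $y_i^{\nabla-1}=y_j^{\nabla-1}$, which translates back to $x_i^{\nabla-1}=x_j^{\nabla-1}$ using $\deg v_i=\deg v_j$. The case of $L^+$ is analogous (or follows from Remark \ref{rmk2-} since $L^+=2\cdot\id-L$ shares eigenvectors with $L$). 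I do not foresee a substantive obstacle; the only point requiring care is verifying that the off-diagonal parts of the two Laplacian rows really do match after cancellation, which is exactly what the degree equality $\deg v_i=\deg v_j$ provides.
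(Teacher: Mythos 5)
Your proof is correct and follows essentially the same route as the paper: compare the $i$-th and $j$-th components of the eigenvalue equation and use the equality of the two rows to cancel the left-hand sides. The paper only writes out the adjacency case and declares the Laplacian cases ``similar,'' whereas you supply the details (the degree equality $\deg v_i=\deg v_j$, the vanishing of $\mathcal{L}_{i,j,\ldots,j}$ and $\mathcal{L}_{j,i,\ldots,i}$, and the transfer to $L$ via Proposition \ref{prop:isospectral}), all of which check out.
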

\begin{proof}
We only prove the claim for $A$, the other cases being similar. Since $(\lambda,\mathbf{x})$ is an eigenpair for $A$,
\begin{equation}\label{eq:v_i}
    \sum_{i_2,\ldots,i_\nabla\in [N]} A_{i,i_2,\ldots,i_\nabla}x_{i_2}\cdots x_{i_\nabla}=\lambda x_i^{\nabla-1}
\end{equation}and
\begin{equation}\label{eq:v_j}
    \sum_{i_2,\ldots,i_\nabla\in [N]} A_{j,i_2,\ldots,i_\nabla}x_{i_2}\cdots x_{i_\nabla}=\lambda x_j^{\nabla-1}.
\end{equation}Since $v_i$ and $v_j$ are duplicate, the left-hand sides in \eqref{eq:v_i} and \eqref{eq:v_j} coincide. Thus,
\begin{equation*}
    \lambda x_i^{\nabla-1}=\lambda x_j^{\nabla-1}.
\end{equation*}Since $\lambda\neq 0$, this implies that $x_i^{\nabla-1}=x_j^{\nabla-1}$.
\end{proof}

\section{The hyperflower}\label{section: iperfiore}

The $\nabla$-\emph{hyperflower} 
is the unweighted, $\nabla$-uniform hypergraph $G=(V,E)$ on $N$ nodes and $M=N-\nabla+1$  edges, such that
\begin{itemize}
    \item $V=\{v_1,\ldots,v_N\}$
    \item $E=\{\ell_{\nabla},\ldots,\ell_{N}\}$
    \item $\ell_j=\{v_1,\ldots,v_{\nabla-1},v_j\}$ for every $j\in\{\nabla,\ldots,N\}$.
\end{itemize}
We say that the vertices $v_1,\ldots,v_{\nabla-1}$ are the \emph{central vertices} of $G$, while $v_{\nabla},\ldots,v_N$ are its \emph{peripheral vertices.} \newline

If $\nabla=2$, then the $\nabla$-hyperflower is simply the star graph, and its spectrum with respect to any operator is well known. For $\nabla\geq 3$, the results in this section are new. \newline  

If $G$ is a $\nabla$-hyperflower, all its central vertices belong to all edges, hence they all have degree $M$. Moreover, the $M$ peripheral vertices of $G$ have degree $1$. They are duplicate of each other and therefore, by Theorem \ref{thm:duplicate}, $0$ is an eigenvalue for $A$ with Hspan multiplicity at least $M$, while $1$ is an eigenvalue for $L$, $L^+$, $\mathcal{L}$ and $\mathcal{L}^+$, with Hspan multiplicity at least $M$. In the following proposition we improve this result for hypergraphs that have no edges of size $2$, as for instance the $\nabla$-hyperflower for $\nabla\geq 3$.

\begin{proposition}\label{prop:0flower}
Let $G$ be an hypergraph with $N$ vertices such that every edge contains at least three vertices. Then
\begin{itemize}
    \item $0$ is an eigenvalue for $A$, with Hspan multiplicity equal to $N$, and
    \item $1$ is an eigenvalue for $L$, $L^+$, $\mathcal{L}$ and $\mathcal{L}^+$, with Hspan multiplicity equal to $N$.
\end{itemize}
In particular, this holds for the $\nabla$-hyperflower whenever $\nabla\ge 3$.\end{proposition}

\begin{proof}
We only prove the claim for $A$. The other claims then follow from Remark \ref{rmk2-}. By hypothesis, an entry $A_{i_1,\ldots, i_{\nabla}}$ of $A$ is nonzero only if $|\{i_1,\ldots, i_{\nabla}\}|\ge 3$. Therefore, given $i\in[N]$, each monomial of the polynomial
\[\sum_{i_2,\ldots,i_k\in [N]} A_{i,i_2,\ldots,i_\nabla}x_{i_2}\cdots x_{i_\nabla}\]
involves at least two different variables. This implies that such polynomial vanishes on every vector of the canonical basis of $\R^N$, thus $0\in\spec(A)$ and $\textrm{Hsm}(0)=N$.
\end{proof}

\begin{theorem}\label{thm:Ahyperflower}
Let $G$ be a $\nabla$-hyperflower. Given $\lambda\in \mathbb{C}\setminus\{0\}$ and $\mathbf{x}\in \mathbb{C}^N\setminus \{\mathbf{0}\}$, we have that $(\lambda,\mathbf{x})$ is an eigenpair for $A$ if and only if, up to multiplying $\mathbf{x}$ by a non-zero constant, all the following conditions hold:
\begin{enumerate}
    \item $x_j^{\nabla-1}=1$ for each $j\in\{\nabla,\ldots,N\}$;
    \item $x_i^{\nabla}=\sum_{j=\nabla}^N x_j$
for each $i\in [\nabla-1]$;
    \item $\lambda=x_1\cdots x_{\nabla-1}$.
\end{enumerate}
\end{theorem}
\begin{proof}By Proposition \ref{prop:duplicate}, if $(\lambda,\mathbf{x})$ is an eigenpair for $A$ and $\lambda\neq 0$, then $x_j^{\nabla-1}$ must be constant for $j\in\{\nabla,\ldots,N\}$.  Assume first that $x_j=0$ for all $j\in\{\nabla,\ldots,N\}$. Then, for $i\in \{1,\ldots,\nabla-1\}$, the left side of
\begin{equation*}
    \sum_{i_2,\ldots,i_\nabla\in [N]} A_{i,i_2,\ldots,i_\nabla}x_{i_2}\cdots x_{i_\nabla}=\lambda x_i^{\nabla-1}
\end{equation*}vanishes. This implies that $\lambda x_i^{\nabla-1}=0$, therefore $x_i=0$ for each $i\in [\nabla-1]$, but this is a contradiction since $\mathbf{x}\neq \mathbf{0}$. Hence, $x_j^{\nabla-1}$ is constant and non-zero for $j\in\{\nabla,\ldots,N\}$.  Up to multiplying $\mathbf{x}$ by a non-zero constant, we can assume that  
\begin{equation}\label{eq:root1}
  x_j^{\nabla-1}=1\text{ for each }j\in\{\nabla,\ldots,N\}.
\end{equation}
Now, since $G$ is $\nabla$-uniform, $(\lambda,\mathbf{x})$ is an eigenpair for $A$ if and only if 
\begin{equation}\label{eq:uniform}
    \sum_{(v_k,v_{i_2},\ldots,v_{i_\nabla})\in E}x_{i_2}\cdots x_{i_\nabla} =\lambda x_k^{\nabla-1}
\end{equation}
for each $k\in [N]$. If $v_j$ is a peripheral vertex, then by \eqref{eq:root1}, \eqref{eq:uniform} becomes
\begin{equation}\label{eq:productlambda}
    x_1\cdots x_{\nabla-1} =\lambda.
\end{equation}Hence, since $\lambda\neq 0$, we have that $x_i\neq 0$ for each $i\in [\nabla-1]$.\newline
If $v_i$ is a central vertex, i.e.\ $i\in [\nabla-1]$, then \eqref{eq:uniform} becomes
\begin{equation*}
        \sum_{j=\nabla}^N x_j\cdot\frac{x_1\cdots x_{\nabla-1}}{x_i} =\lambda x_i^{\nabla-1}.
\end{equation*}By \eqref{eq:productlambda}, the latter equality is equivalent to
\begin{equation}\label{eq:nablaji}
    \sum_{j=\nabla}^N x_j=x_i^{\nabla},
\end{equation}for each $i\in [\nabla-1]$. This proves the claim.
\end{proof}

\begin{remark}
If $(\lambda,\mathbf{x})$ satisfies Theorem \ref{thm:Ahyperflower}, then $x_i^\nabla$ is constant for all $i\in [\nabla-1]$ and
\begin{equation}\label{eq:radiceM}
 \left|x_i^{\nabla}\right|=\left| \sum_{j=\nabla}^N x_j\right|\leq \sum_{j=\nabla}^N \left| x_j\right|=M,
\end{equation}implying that $|x_i|\leq \sqrt[\nabla]{M}$. Hence,
\begin{equation*}
    |\lambda|= \left|x_1\cdots x_{\nabla-1}\right|\leq \sqrt[\nabla]{M^{\nabla-1}}.
\end{equation*}
\end{remark}

\begin{corollary}\label{cor:hyperflower}
If $G$ is a $\nabla$-hyperflower with $M$ edges and $\omega$ is a $\nabla$-th root of $1$, then
    \begin{equation*}
        \sqrt[\nabla]{M^{\nabla-1}},\quad \omega\sqrt[\nabla]{M^{\nabla-1}},\quad \ldots,\quad \omega^{\nabla-1}\sqrt[\nabla]{M^{\nabla-1}}
    \end{equation*}are eigenvalues of $A(G)$. If, furthermore, $M=n(\nabla-1)+1$ for some positive integer $n$, then $1, \omega, \ldots, \omega^{\nabla-1}$ are also eigenvalues of $A(G)$. 
\end{corollary}

\begin{proof}
If $\omega$ is a $\nabla$-th root of $1$ and $\beta$ is a $(\nabla-1)$-th root of $\omega$, let
\begin{itemize}
    \item $x_j:=\beta^\nabla$ for each $j\in\{\nabla,\ldots,N\}$, and 
    \item $x_i:=\beta\sqrt[\nabla]{M}$ for each $i\in[\nabla-1]$.
\end{itemize}By Theorem \ref{thm:Ahyperflower}, $\mathbf{x}$ is an eigenvector for the eigenvalue
    \begin{equation*}
x_i^{\nabla-1}=\omega\sqrt[\nabla]{M^{\nabla-1}}.
    \end{equation*}Hence
    \begin{equation*}
     \sqrt[\nabla]{M^{\nabla-1}},\quad \omega\sqrt[\nabla]{M^{\nabla-1}},\quad \ldots,\quad \omega^{\nabla-1}\sqrt[\nabla]{M^{\nabla-1}}
    \end{equation*}are eigenvalues of $G$.\newline
 Now, assume that $M=n(\nabla-1)+1$ for some positive integer $n$. Let again $\omega$ be a $\nabla$-th root of $1$. Let $\alpha$ be a $(\nabla-1)$-th root of $\omega$ and let $z:=\alpha^\nabla$, so that $z$ is a $(\nabla-1)$-th root of $1$. Assume that the $M$ elements $x_{\nabla},\ldots,x_N$ are given by
    \begin{equation*}
        z \text{ ($n+1$ times) }, \quad z^2\text{ ($n$ times) }, \quad \ldots, \quad 1 \text{ ($n$ times)}.
    \end{equation*}Then, since $\sum_{k=1}^{\nabla-1}z^k=0$, by Theorem \ref{thm:Ahyperflower} we must have
    \begin{equation*}
    \sum_{j=\nabla}^N x_j=z=x_i^{\nabla},
\end{equation*}if we want $\mathbf{x}$ to be an eigenvector. If, in particular, $x_i:=\alpha$ for each $i\in [\nabla-1]$, then the above condition is satisfied and, by Theorem \ref{thm:Ahyperflower}, $\mathbf{x}$ is an eigenvector with eigenvalue
\begin{equation*}
x_i^{\nabla-1}=\alpha^{\nabla-1}=\omega.
\end{equation*}Hence    $  1, \omega, \ldots, \omega^{\nabla-1}$ are eigenvalues of $A(G)$. 
\end{proof}

For simplicity, in Theorem \ref{thm:Ahyperflower} and Corollary \ref{cor:hyperflower}, we focused on the adjacency tensor of a $\nabla$-hyperflower $G$. However, similar results can be shown also for the other tensors associated to $G$. The following two theorems for $K(G)$ and $\mathcal{L}(G)$, respectively, can be proved as Theorem \ref{thm:Ahyperflower}.

\begin{theorem}
Let $G$ be a $\nabla$-hyperflower with $M$ edges. Given $\lambda\in \mathbb{C}\setminus\{1,M\}$ and $\mathbf{x}\in \mathbb{C}^N\setminus \{\mathbf{0}\}$, we have that $(\lambda,\mathbf{x})$ is an eigenpair for $K$ if and only if, up to multiplying $\mathbf{x}$ by a non-zero constant, all the following conditions hold:
\begin{enumerate}
    \item $x_j^{\nabla-1}=1$ for each $j\in\{\nabla,\ldots,N\}$;
    \item $x_i^{\nabla}=\frac{(1-\lambda)}{(M-\lambda)}\cdot \left(\sum_{j=\nabla}^N x_j\right)$
for each $i\in [\nabla-1]$;
    \item $\lambda=1-x_1\cdots x_{\nabla-1}$.
\end{enumerate}
\end{theorem}

\begin{theorem}
Let $G$ be a $\nabla$-hyperflower with $M$ edges. Given $\lambda\in \mathbb{C}\setminus\{1\}$ and $\mathbf{x}\in \mathbb{C}^N\setminus \{\mathbf{0}\}$, we have that $(\lambda,\mathbf{x})$ is an eigenpair for $\mathcal{L}$ if and only if, up to multiplying $\mathbf{x}$ by a non-zero constant, all the following conditions hold:
\begin{enumerate}
    \item $x_j^{\nabla-1}=1$ for each $j\in\{\nabla,\ldots,N\}$;
    \item $x_i^{\nabla}=\frac{1}{M}\cdot \left(\sum_{j=\nabla}^N x_j\right)$
for each $i\in [\nabla-1]$;
    \item $\lambda=1-x_1\cdots x_{\nabla-1}$.
\end{enumerate}
\end{theorem}

    \begin{example}\label{ex:hyperflower}
Let $G=(V,E)$ be the $3$-hyperflower with $V=\{v_1,v_2,v_3,v_4\}$ and $E = \{\{v_1,v_2,v_3\},\{v_1,v_2,v_4\}\}$ (Figure \ref{fig:hyperflower}).
\begin{figure}[ht]
    \centering
    \includegraphics[width=7cm]{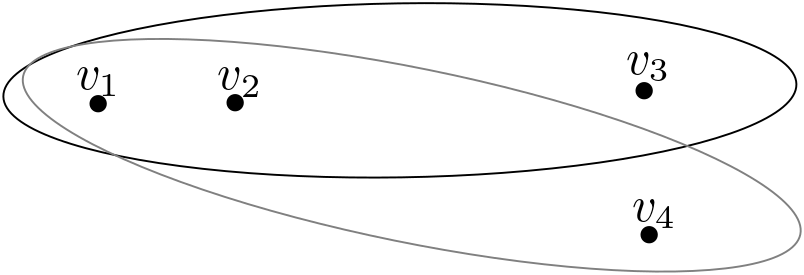}
    \caption{The hyperflower in Example \ref{ex:hyperflower}.}
    \label{fig:hyperflower}
\end{figure}\newline


The characterstic polynomial of $A$ is $(\lambda^3 - 4)^3\lambda^{23}$. Hence, the eigenvalues of $A$ are
\begin{align*}
    &0 \text{ with multiplicity 23} & \sqrt[3]{4} \text{ with multiplicity 3}\\
    &\sqrt[3]{4}\omega \text{ with multiplicity 3} 
    &\sqrt[3]{4}\omega^2 \text{ with multiplicity 3},
\end{align*}
where $\omega$ is a third root of $1$. In particular, the distinct eigenvalues are exactly the ones in Proposition \ref{prop:0flower} and Corollary \ref{cor:hyperflower}.\\
    The characteristic polynomial of $K$ is $(\lambda^2 - 5\lambda + 8)^3(\lambda - 1)^{13}(\lambda - 2)^{10}\lambda^3$. Its roots are
    \begin{align*}
        &0 \text{ with multiplicity 3}
        &\frac{5}{2}+\frac{\sqrt{7}}{2}\mathbf{i}\text{ with multiplicity 3}\\
        &\frac{5}{2}-\frac{\sqrt{7}}{2}\mathbf{i}\text{ with multiplicity 3}
        & 1 \text{ with multiplicity 13}\\
        & 2 \text{ with multiplicity 10}.
    \end{align*}
The characteristic polynomial of $\mathcal{L}$ is $(\lambda^2 - 3\lambda + 3)^3(\lambda - 1)^{23}\lambda^3$. Thus, its eigenvalues are
\begin{align*}
    & 0 \text{ with multiplicity 3}
    & 1 \text{ with multiplicity 23}\\
    & 2+\omega \text{ with multiplicity 3}
    & 2+\omega^2 \text{ with multiplicity 3}.
\end{align*}
\end{example}
\begin{example}\label{ex:flower2}
    Let $G=(V,E)$ be the $3$-hyperflower with $V=\{v_1,v_2,v_3,v_4,v_5\}$ and $E = \{\{v_1,v_2,v_3\},\{v_1,v_2,v_4\},\{v_1,v_2,v_5\}\}$ (Figure \ref{fig:flower2}).
    \begin{figure}[ht]
    \centering
    \includegraphics[width=7cm]{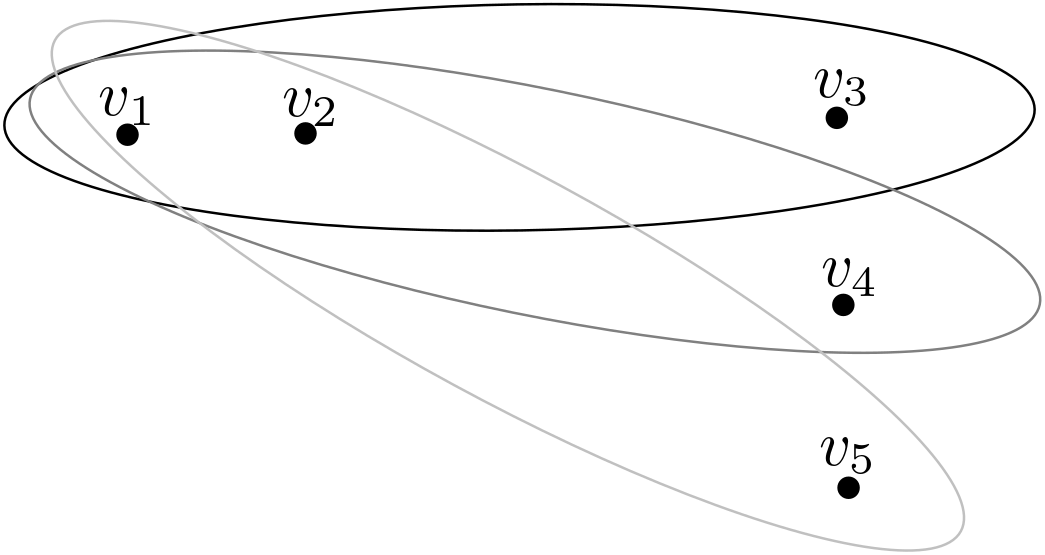}
    \caption{The hyperflower in Example \ref{ex:flower2}}
    \label{fig:flower2}
\end{figure}\newline
The characteristic polynomial of $A$ is $(\lambda^3 - 9)^3(\lambda^2 + \lambda + 1)^9(\lambda - 1)^9\lambda^{44}$,
therefore its eigenvalues are
\begin{align*}
    &0 \text{ with multiplicity 44}
    &1 \text{ with multiplicity 9}\\
    &\sqrt[3]{9} \text{ with multiplicity 3}
    &\sqrt[3]{9}\omega \text{ with multiplicity 3}\\
    &\sqrt[3]{9}\omega^2 \text{ with multiplicity 3}
    &\omega \text{ with multiplicity 9}\\
    &\omega^2 \text{ with multiplicity 9}.
\end{align*}Also in this case, the distinct eigenvalues of $A$ are exactly the ones in Proposition \ref{prop:0flower} and Corollary \ref{cor:hyperflower}. Now, the characteristic polynomial of $K$ is
    \[
    (\lambda^3 - 7\lambda^2 + 15\lambda - 8)^9(\lambda^2 - 7\lambda + 15)^3(\lambda - 1)^{36}(\lambda - 3)^8\lambda^3.
    \]
The eigenvalues of $K$ are
\begin{align*}
    &0 \text{ with multiplicity 3}\\
    &0.7944305695994095 \text{ with multiplicity 9}\\
    &1 \text{ with multiplicity 36}\\
    &3 \text{ with multiplicity 8}\\
    &3.1027847152002956 + 0.6654569511528129\mathbf{i} \text{ with multiplicity 9}\\
    &3.1027847152002956 - 0.6654569511528129\mathbf{i} \text{ with multiplicity 9}\\
    &\frac{7}{2} + \frac{\sqrt{11}}{2}\mathbf{i}, \text{ with multiplicity 3}\\
    &\frac{7}{2} - \frac{\sqrt{11}}{2}\mathbf{i} \text{ with multiplicity 3}.
\end{align*}
The characteristic polynomial of $\mathcal{L}$ is 
\[ \frac{(9\lambda^3 - 27\lambda^2 + 27\lambda - 8)^9(\lambda^2 - 3\lambda + 3)^3(\lambda - 1)^{44}\lambda^3}{387420489}.
\]
Its eigenvalues are 
\begin{align*}
    & 0, \text{ with multiplicity 3}\\
    & 0.519250143230864, \text{ with multiplicity 9}\\
    & 1, \text{ with multiplicity 44}\\
    & 1.240374928384567 + 0.4163415888278001\mathbf{i}, \text{ with multiplicity 9}\\
    & 1.240374928384567 - 0.4163415888278001\mathbf{i}, \text{ with multiplicity 9}\\
    & 2+\omega, \text{ with multiplicity 3}\\
    & 2+\omega^2, \text{ with multiplicity 3}.
\end{align*}
\end{example}

\section{Spectral symmetries}\label{section: symmetries}
In this section we discuss some spectral symmetries. First, we recall \cite[Theorem 3.12]{fan2019spectral} and we apply it to the hypergraph tensors.
\begin{definition}[\cite{fan2019spectral}]
Let $T$ be a tensor and let $\ell$ be a positive integer. The tensor $T$ is \emph{spectral $\ell$-symmetric} if 
\begin{equation*}
    \spec(T)=e^{\frac{2\pi\mathbf{i}}{\ell}}\spec(T).
\end{equation*}
\end{definition}
\begin{definition}[\cite{fan2019spectral}]Let $k\geq 2$ and $\ell\geq 2$ such that $\ell | k$. A $k$-th order $n$-dimensional tensor $T$ is \emph{$(k,\ell)$-colorable} if there exists a map $\phi:[n]\rightarrow [k]$ such that, if $T_{i_1,\ldots,i_k}\neq 0$, then
\begin{equation*}
    \phi(i_1)+\ldots+\phi(i_k)\equiv \frac{k}{\ell} \mod k.
\end{equation*}Such $\phi$ is an \emph{$(k,\ell)$-coloring} of $T$.
\end{definition}

\begin{definition}[\cite{fan2019spectral}]
The hypergraph $G$ is \emph{$(\nabla,\ell)$-colorable}, for some $\ell\geq 2$ such that $\ell | \nabla$, if there exists a map $\phi:V \rightarrow [\nabla]$ such that, if $\{v_{i_1},\ldots,v_{i_\nabla}\}\in E$, then
\begin{equation*}
    \phi(v_{i_1})+\ldots+\phi(v_{i_\nabla})\equiv \frac{\nabla}{\ell} \mod \nabla.
\end{equation*}
\end{definition}
\begin{remark}
Clearly, the hypergraph $G$ is $(\nabla,\ell)$-colorable if and only if its associated tensors are $(\nabla,\ell)$-colorable.
\end{remark}
\begin{theorem}[\cite{fan2019spectral}]\label{thm:ellsymm}
Let $T$ be a symmetric weakly irreducible non-negative tensor of order $k$. Then $T$ is spectral $\ell$-symmetric if and only if $T$ is $(k,\ell)$-colorable.
\end{theorem}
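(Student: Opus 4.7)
The plan is to follow the approach of \cite{fan2019spectral} and prove the two implications separately, since they require very different tools.

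For the sufficiency (colorability implies spectral symmetry), I would make the spectral shift explicit through a diagonal scaling. Set $\omega:=e^{2\pi\mathbf{i}/k}$ and, given a $(k,\ell)$-coloring $\phi$, consider the tensor
\[T''_{i_1,\ldots,i_k}:=\omega^{-(k-1)\phi(i_1)+\phi(i_2)+\cdots+\phi(i_k)}\,T_{i_1,\ldots,i_k}.\]
On every nonzero entry, the coloring rule $\phi(i_1)+\cdots+\phi(i_k)\equiv k/\ell\pmod k$ collapses the exponent modulo $k$ to $k/\ell$, giving $T''=e^{2\pi\mathbf{i}/\ell}T$. On the other hand, $T''$ is obtained from $T$ by the diagonal scaling $d_i:=\omega^{\phi(i)}$, which preserves the spectrum with algebraic multiplicities (the same fact used in the proof of Proposition \ref{prop:isospectral}). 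Hence $\spec(T)=\spec(T'')=e^{2\pi\mathbf{i}/\ell}\spec(T)$.

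For the necessity, my plan is to use the Perron--Frobenius theorem (Theorem \ref{minmax:waklyirr}) to reconstruct a coloring from a spectral symmetry. By Perron--Frobenius, $\rho(T)$ is a simple $H$-eigenvalue with a positive $H$-eigenvector $\bar{\mathbf{x}}$; by the hypothesis, $\mu:=e^{2\pi\mathbf{i}/\ell}\rho(T)$ is also an eigenvalue, with some eigenvector $\mathbf{z}$. Taking moduli entrywise and using $T\geq 0$, the triangle inequality yields $\rho(T)|z_i|^{k-1}=|(T\mathbf{z}^{k-1})_i|\leq (T|\mathbf{z}|^{k-1})_i$, and the Collatz--Wielandt characterization in Theorem \ref{minmax:waklyirr} forces equality. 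Thus $|\mathbf{z}|$ is a positive $H$-eigenvector for $\rho(T)$ and, up to a positive scalar, agrees with $\bar{\mathbf{x}}$. Writing $z_i=|z_i|e^{\mathbf{i}\theta_i}$, the equality case in the triangle inequality forces every nonzero term $T_{i,i_2,\ldots,i_k}z_{i_2}\cdots z_{i_k}$ to share the phase of $\mu z_i^{k-1}$, producing the congruence
\[\theta_{i_2}+\cdots+\theta_{i_k}\equiv \tfrac{2\pi}{\ell}+(k-1)\theta_i\pmod{2\pi}\]
whenever $T_{i,i_2,\ldots,i_k}\neq 0$.

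The main obstacle is the last step: promoting the angles $\theta_i$ to an integer-valued coloring $\phi:[N]\to[k]$. My plan is to chain the phase congruence along nonzero entries of $T$, using weak irreducibility to propagate information from a fixed reference vertex to every index, and then to apply a global rotation of $\mathbf{z}$ so that every $\theta_i$ lands in $\tfrac{2\pi}{k}\Z$. Setting $\phi(i):=\tfrac{k\theta_i}{2\pi}\bmod k$ and rearranging the congruence above will then yield exactly $\phi(i)+\phi(i_2)+\cdots+\phi(i_k)\equiv k/\ell\pmod k$, so that $\phi$ is a $(k,\ell)$-coloring of $T$; it is at this point that both the symmetry of $T$ (to make the congruence invariant under permuting the indices) and its weak irreducibility (to ensure $\phi$ is defined on all of $[N]$) are essential.
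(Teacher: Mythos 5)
The paper does not prove this statement: Theorem \ref{thm:ellsymm} is imported verbatim from \cite{fan2019spectral} (their Theorem 3.12) and used as a black box, so there is no in-paper proof to compare yours against. What you have written is, in substance, a reconstruction of the argument in the cited reference, and it is correct in outline. Your sufficiency direction is complete: the exponent $-(k-1)\phi(i_1)+\phi(i_2)+\cdots+\phi(i_k)$ reduces mod $k$ to $\phi(i_1)+\cdots+\phi(i_k)\equiv k/\ell$, so the diagonal scaling by $d_i=\omega^{\phi(i)}$ (the same spectrum-preserving transformation invoked in Proposition \ref{prop:isospectral}, valid for nonzero complex $d_i$) turns $T$ into $e^{2\pi\mathbf{i}/\ell}T$, and $\spec(e^{2\pi\mathbf{i}/\ell}T)=e^{2\pi\mathbf{i}/\ell}\spec(T)$.

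Two remarks on the necessity direction. First, the step ``the Collatz--Wielandt characterization forces equality'' is compressed: the max--min formula alone only gives $\rho(T)\geq\rho(T)$ from the super-eigenvector inequality $T|\mathbf{z}|^{k-1}\geq\rho(T)|\mathbf{z}|^{[k-1]}$; what you actually need is the standard lemma that for a weakly irreducible non-negative tensor any non-negative super-eigenvector at level $\rho(T)$ is an eigenvector and is strictly positive (this also supplies $z_i\neq 0$ for all $i$, without which the phases $\theta_i$ are undefined). This is standard Perron--Frobenius machinery, not a gap in the strategy, but it deserves an explicit citation rather than an appeal to Theorem \ref{minmax:waklyirr} as stated. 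Second, the final ``plan'' does go through, and the mechanism worth spelling out is exactly the one you hint at: adding $\theta_i$ to both sides of your congruence gives $\theta_i+\theta_{i_2}+\cdots+\theta_{i_k}\equiv\frac{2\pi}{\ell}+k\theta_i\pmod{2\pi}$, whose left side is symmetric in all indices; since $T$ is symmetric, the same relation holds with any other index of the entry in the distinguished first position, whence $k\theta_i\equiv k\theta_j\pmod{2\pi}$ for any two indices sharing a nonzero entry, and weak irreducibility makes $k\theta_i$ globally constant, so a single rotation of $\mathbf{z}$ puts every $\theta_i$ in $\frac{2\pi}{k}\Z$ and the rearrangement you compute yields the coloring. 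So the proposal is sound; it just leaves the two standard lemmas and the symmetry-swap computation to be written out.
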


As an immediate consequence of Theorem \ref{thm:ellsymm}, we obtain the following
\begin{corollary}\label{cor:colorable}
A connected hypergraph $G$ is $(\nabla,\ell)$-colorable if and only if  one of $A$, $K^+$, $L^+$ and $\mathcal{L}^+$ are spectral $\ell$-symmetric, if and only if all of them are spectral $\ell$-symmetric.
\end{corollary}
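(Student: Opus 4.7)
The plan is to apply Theorem \ref{thm:ellsymm} to each of the tensors $A$, $K^+$, $L^+$, and to handle $\mathcal{L}^+$ separately by isospectrality. First I would verify that $A$, $K^+$, and $L^+$ satisfy the three hypotheses of Theorem \ref{thm:ellsymm}: symmetry and non-negativity come directly from the Remark immediately following the definitions in Section \ref{section:hyp}, while weak irreducibility follows from Theorem \ref{thm:weaklyirr} together with the standing assumption that $G$ is connected.

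Next I would invoke the preceding Remark, which observes that the hypergraph $G$ is $(\nabla,\ell)$-colorable if and only if its associated tensor is $(\nabla,\ell)$-colorable. This lets me transport the combinatorial colorability condition on $G$ to each tensor, so Theorem \ref{thm:ellsymm} applied separately to $A$, $K^+$, and $L^+$ yields the equivalence
\begin{equation*}
G \text{ is } (\nabla,\ell)\text{-colorable} \iff T \text{ is spectral } \ell\text{-symmetric}, \quad T\in\{A,K^+,L^+\}.
\end{equation*}

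Finally I would treat $\mathcal{L}^+$. The tensor $\mathcal{L}^+$ is non-negative but not symmetric in general, so Theorem \ref{thm:ellsymm} does not apply to it directly. Instead I would appeal to Proposition \ref{prop:isospectral}, which states that $L^+$ and $\mathcal{L}^+$ have the same eigenvalues with the same algebraic multiplicities. Since spectral $\ell$-symmetry is a property that depends only on the multiset of eigenvalues, $\mathcal{L}^+$ is spectral $\ell$-symmetric if and only if $L^+$ is. Combining this with the three equivalences from the previous step gives the claim. The only potential obstacle is precisely this asymmetry of $\mathcal{L}^+$; bypassing Theorem \ref{thm:ellsymm} for it via the isospectrality with $L^+$ is the essential observation that makes the argument uniform.
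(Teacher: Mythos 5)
Your argument is correct and follows essentially the same route as the paper: apply Theorem \ref{thm:ellsymm} to $A$, $K^+$ and $L^+$ (which are symmetric, non-negative, and weakly irreducible by Theorem \ref{thm:weaklyirr} since $G$ is connected), and then transfer the conclusion to $\mathcal{L}^+$ via the isospectrality of Proposition \ref{prop:isospectral}. Your explicit remark that $\mathcal{L}^+$ fails the symmetry hypothesis and therefore must be handled through $L^+$ is exactly the point the paper's proof relies on, just stated more carefully.
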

\begin{proof}
The claim for $A$, $K^+$ and $L^+$ follows directly from Theorem \ref{thm:ellsymm}, since these are all symmetric non-negative tensors and, by Theorem \ref{thm:weaklyirr}, they are also weakly irreducible, as we are assuming that $G$ is connected. The claim for $\mathcal{L}^+$ then follows by Proposition \ref{prop:isospectral}.
\end{proof}

\begin{example}
    The hyperflowers from the previous section are $(\nabla,\nabla)$-colorable. This can be seen by coloring the central vertices with $\nabla$ and the peripheral vertices with $1$. By Corollary \ref{cor:colorable}, this implies that $A$ is spectral $\nabla$-symmetric, as we can observe in Examples \ref{ex:hyperflower} and \ref{ex:flower2}.
\end{example}

\begin{remark}
If $G$ is odd-bipartite and $V=V_1\sqcup V_2$ is a corresponding bipartition of the vertex set, then the map $\phi:V\rightarrow [\nabla]$ which has value $\frac{\nabla}{2}$ on $V_1$ and value $0$ on $V_2$ shows that $G$ is $(\nabla,2)$-colorable. 
\end{remark}

We now discuss another kind of spectral symmetry for the hypergraph tensors. Recall that a graph $G$ is \emph{bipartite} if one can decompose the vertex set as a disjoint union $V=V_1\sqcup V_2$ such that each edge has one endpoint in $V_1$ and one endpoint in $V_2$. It is known that, for a simple graph $G$, the following are equivalent:
\begin{enumerate}
    \item $G$ is bipartite
    \item $\lambda\in \spec(A) \iff -\lambda\in \spec(A)$, with the same multiplicity
\item $\spec(K)=\spec(K^+)$
\item $\lambda\in \spec(L) \iff 2-\lambda\in \spec(L)$, with the same multiplicity.
\end{enumerate}

\begin{remark}For simple graphs, being bipartite is the same as being $(2,2)$-colorable. Therefore, the equivalence of 1., 2.\ and 4.\ above (using Remark \ref{rmk2-}) is a special case of Corollary \ref{cor:colorable}.
\end{remark}

There are various ways of generalizing the notion of bipartite graph to the case of hypergraphs, see for instance the \emph{balanced hypergraphs} in \cite[Section 3]{balanced} or the \emph{bipartite hypergraphs} in \cite[Section 6.1.1]{JM2019}. We consider the \emph{odd-bipartite hypergraphs} in \cite{oddbipartite}, which are defined for uniform unweighted hypergraphs, and we generalize them for any hypergraph, as follows.
\begin{definition}
The hypergraph $G$ is \emph{odd-bipartite} if $\nabla$ is even and one can decompose the vertex set as a disjoint union $V=V_1\sqcup V_2$ such that, if $i_1,\ldots,i_\nabla \in [N]$, then
\begin{equation*}
    \{v_{i_1},\ldots,v_{i_\nabla}\}  \in E \Rightarrow  \text{ There is an odd number of vertices of }V_1 \text{ among } v_{i_1},\ldots,v_{i_\nabla},
\end{equation*}where the vertices $v_{i_1},\ldots,v_{i_\nabla}$ are counted with repetitions.
\end{definition}

Clearly, if $\nabla=2$, $G$ is a bipartite graph if and only if $G$ is odd-bipartite. Theorem 2.1 in \cite{oddbipartite}, which is formulated for $A$ and $K$ in the case of unweighted uniform hypergraphs, can be formulated also for $L$ and for all hypergraphs, as follows.
\begin{theorem}\label{thm:oddbipartite}
If $G$ is a connected hypergraph, the following conditions are equivalent:
\begin{enumerate}
    \item $\nabla$ is even and $G$ is odd-bipartite.
        \item There exists a diagonal matrix $P$ of order $N$ with all the diagonal entries $\pm 1$ and $P\neq -\id$ such that $A=-P^{-(\nabla-1)}AP$. 
    \item There exists a diagonal matrix $P$ of order $N$ with all the diagonal entries $\pm 1$ and $P\neq -\id$ such that $K=P^{-(\nabla-1)}K^+P$. 
    \item There exists a diagonal matrix $P$ of order $N$ with all the diagonal entries $\pm 1$ and $P\neq -\id$ such that $L=P^{-(\nabla-1)}L^+P$. 
\end{enumerate}
\end{theorem}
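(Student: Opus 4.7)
The plan is to unwind the tensor equations in coordinates, translate condition $(2)$ directly into the combinatorial statement $(1)$, and then reduce $(3)$ and $(4)$ to $(2)$ by checking that the extra diagonal contributions are inert. For a diagonal matrix $P$ with entries $p_i\in\{\pm1\}$ (so $P^{-1}=P$), the tensor action unfolds as
\begin{equation*}
(P^{-(\nabla-1)}TP)_{i_1,\ldots,i_\nabla}=p_{i_1}^{\nabla-1}\,T_{i_1,\ldots,i_\nabla}\,p_{i_2}\cdots p_{i_\nabla},
\end{equation*}
and any diagonal entry is preserved since $p_i^{\nabla-1}\cdot p_i^{\nabla-1}=p_i^{2(\nabla-1)}=1$.

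For $(1)\Rightarrow(2)$, I would set $p_i:=-1$ if $v_i\in V_1$ and $p_i:=1$ otherwise. The odd-bipartite condition forces $V_1\neq V$ (otherwise every edge would contain $\nabla$, an even number, of vertices of $V_1$), so $P\neq-\id$. For every nonzero entry $A_{i_1,\ldots,i_\nabla}$ the tuple $v_{i_1},\ldots,v_{i_\nabla}$ contains an odd number of elements of $V_1$, hence $p_{i_1}\cdots p_{i_\nabla}=-1$, and since $\nabla$ is even this coincides with $p_{i_1}^{\nabla-1}p_{i_2}\cdots p_{i_\nabla}=-1$, which is exactly $A=-P^{-(\nabla-1)}AP$.

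For the converse $(2)\Rightarrow(1)$, the crucial step is showing that $\nabla$ must be even. Pick any nonzero entry $A_{i_1,\ldots,i_\nabla}$ (which exists since $E\neq\emptyset$); symmetry of $A$ gives both
\begin{equation*}
p_{i_1}^{\nabla-1}p_{i_2}\cdots p_{i_\nabla}=-1\qquad\text{and}\qquad p_{i_2}^{\nabla-1}p_{i_1}p_{i_3}\cdots p_{i_\nabla}=-1,
\end{equation*}
whose ratio reads $p_{i_1}^{\nabla-2}=p_{i_2}^{\nabla-2}$. If $\nabla$ were odd, $\nabla-2$ would be odd and this would force $p_{i_1}=p_{i_2}$ whenever $v_{i_1},v_{i_2}$ belong to a common edge; connectedness, via Theorem~\ref{thm:weaklyirr} applied to $A$, would then propagate this to the equality of all $p_i$, leaving only $P=\pm\id$. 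The case $P=-\id$ is excluded by hypothesis, while $P=\id$ plugs into the equation to give $A=-A$, contradicting $A\neq 0$. Hence $\nabla$ is even, the equation on nonzero entries becomes $p_{i_1}\cdots p_{i_\nabla}=-1$, and setting $V_1:=\{v_i\mid p_i=-1\}$ and $V_2:=V\setminus V_1$ recovers the odd-bipartite partition; $V_1\neq\emptyset$ since otherwise $P=\id$ (already ruled out), and $V_2\neq\emptyset$ since $P\neq-\id$.

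Finally, $(2)\Leftrightarrow(3)$ and $(2)\Leftrightarrow(4)$ follow by matching entries. The tensors $K$ and $K^+$ share the same diagonal $\deg v_i$ (resp.\ $L$ and $L^+$ share the diagonal $1$), and the action $T\mapsto P^{-(\nabla-1)}TP$ preserves diagonals, so the diagonal parts of the asserted equalities in $(3)$ and $(4)$ are automatic. Off the diagonal, $K=-A$ and $K^+=A$, so $K=P^{-(\nabla-1)}K^+P$ collapses to $-A_{i_1,\ldots,i_\nabla}=p_{i_1}^{\nabla-1}A_{i_1,\ldots,i_\nabla}p_{i_2}\cdots p_{i_\nabla}$, which is $(2)$; the same computation works for $L$ and $L^+$ after dividing out the common nonzero factor $\prod_{j\in\{i_1,\ldots,i_\nabla\}}(\deg v_j)^{-1/\nabla}$. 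The main obstacle in the whole argument is the parity step that forces $\nabla$ to be even, which is precisely where the tensor symmetry and the connectedness of $G$ interact; everything else is linear-algebraic bookkeeping.
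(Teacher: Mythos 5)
Your proof is correct. For context: the paper does not actually write out an argument for this theorem --- it only remarks that it ``can be proved as \cite[Theorem 2.1]{oddbipartite}'', a result stated there for unweighted $\nabla$-uniform hypergraphs with $\nabla$ even. Your write-up supplies precisely the details that the adaptation to weighted, non-uniform hypergraphs requires. The coordinate identity $(P^{-(\nabla-1)}TP)_{i_1,\ldots,i_\nabla}=p_{i_1}^{\nabla-1}T_{i_1,\ldots,i_\nabla}\,p_{i_2}\cdots p_{i_\nabla}$, the observation that this action fixes diagonal entries, and the reduction of conditions (3) and (4) to condition (2) by cancelling the common positive factor on off-diagonal entries are the standard mechanism and you execute them correctly. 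The one genuinely new ingredient --- needed here because, unlike in the cited reference, evenness of $\nabla$ is part of the conclusion rather than a standing hypothesis --- is your parity step: comparing the constraint at two tuples related by swapping the first two indices of a nonzero entry gives $p_{i_1}^{\nabla-2}=p_{i_2}^{\nabla-2}$, and for odd $\nabla$ connectedness would collapse $P$ to $\pm\id$, both excluded (the first because $P\neq-\id$ is assumed, the second because it would force $A=0$). That argument is sound; the only cosmetic remark is that invoking Theorem~\ref{thm:weaklyirr} there is unnecessary, since plain path-connectedness of $G$ already propagates $p_{i_1}=p_{i_2}$ along edges. I see no gap.
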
Theorem \ref{thm:oddbipartite} can be proved as \cite[Theorem 2.1]{oddbipartite}. It can be applied for proving the following theorem, which generalizes Theorem 2.2 and Theorem 2.3 in \cite{oddbipartite}, as well as the known results for graphs that we discussed above.
\begin{theorem}\label{thm:oddbipartite2}
If $G$ is a connected hypergraph, the following conditions are equivalent:
\begin{enumerate}
    \item  $\nabla$ is even and $G$ is odd-bipartite.
     \item $\spec(A)=-\spec(A)$ and $\hspec(A)=-\hspec(A)$.
    \item $\hspec(A)=-\hspec(A)$.
    \item $\spec(K)=\spec(K^+)$ and $\hspec(K)=\hspec(K^+)$.
    \item $\hspec(K)=\hspec(K^+)$.
     \item $\spec(L)=\spec(L^+)$ and $\hspec(L)=\hspec(L^+)$.
    \item $\hspec(L)=\hspec(L^+)$.
\end{enumerate}
\end{theorem}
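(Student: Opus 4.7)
My plan is to establish the theorem by proving three short cycles of implications: $(1)\Rightarrow(2)\Rightarrow(3)\Rightarrow(1)$, $(1)\Rightarrow(4)\Rightarrow(5)\Rightarrow(1)$, and $(1)\Rightarrow(6)\Rightarrow(7)\Rightarrow(1)$. The steps $(2)\Rightarrow(3)$, $(4)\Rightarrow(5)$ and $(6)\Rightarrow(7)$ amount to forgetting the $\spec$ equality and keeping only the $\hspec$ one, so they are immediate.

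For the three forward implications from (1), I would invoke Theorem \ref{thm:oddbipartite} to produce a real diagonal $\pm 1$ matrix $P\neq -\id$ satisfying $A = -P^{-(\nabla-1)}AP$, $K = P^{-(\nabla-1)}K^+P$, and $L = P^{-(\nabla-1)}L^+P$. As recalled in the proof of Proposition \ref{prop:isospectral}, the conjugation $T\mapsto P^{-(\nabla-1)}TP$ preserves the characteristic polynomial, and trivially $\spec(-T)=-\spec(T)$. Hence the three identities immediately give $\spec(A)=-\spec(A)$, $\spec(K)=\spec(K^+)$ and $\spec(L)=\spec(L^+)$ with algebraic multiplicities. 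A direct unpacking, exploiting that $\nabla$ is even so $p_j^{\nabla-1}=p_j$, shows that the map $\mathbf{x}\mapsto P\mathbf{x}$ sends an eigenpair $(\lambda,\mathbf{x})$ of $A$ to an eigenpair $(-\lambda,P\mathbf{x})$ of $A$, and an eigenpair of $K^+$ (resp.\ $L^+$) to one of $K$ (resp.\ $L$) with the same eigenvalue. Since $P$ is real and invertible, this correspondence preserves reality of eigenvectors, and therefore the three $\hspec$-equalities in (2), (4), (6) also hold.

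The real work sits in the reverse implications $(3)\Rightarrow(1)$, $(5)\Rightarrow(1)$, $(7)\Rightarrow(1)$, and I describe $(3)\Rightarrow(1)$ in some detail; the other two are analogous after replacing $A$ by $K^+$ or $\mathcal{L}^+$, using Proposition \ref{prop:isospectral} and $\rho(\mathcal{L}^+)=2$ from Proposition \ref{prop_radiusL}. Since $G$ is connected, Theorem \ref{thm:weaklyirr} makes $A$ weakly irreducible and non-negative, and Theorem \ref{minmax:waklyirr} supplies a strictly positive Perron $H$-eigenvector $\bar{\mathbf{x}}$ for $\rho(A)$, unique up to positive scaling. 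Hypothesis (3) then gives a real eigenvector $\mathbf{x}$ of $A$ for the eigenvalue $-\rho(A)$. Taking componentwise absolute values and using $A\geq 0$,
\begin{equation*}
(A|\mathbf{x}|^{\nabla-1})_i \;\geq\; \bigl|(A\mathbf{x}^{\nabla-1})_i\bigr| \;=\; \rho(A)\,|x_i|^{\nabla-1} \qquad \text{for every } i,
\end{equation*}
so the Collatz--Wielandt characterization embedded in Theorem \ref{minmax:waklyirr} forces $|\mathbf{x}|$ to be a positive multiple of $\bar{\mathbf{x}}$. In particular $|\mathbf{x}|>0$ and all of the inequalities above are equalities. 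The triangle-inequality equality then forces, at every non-zero entry $A_{i_1,\ldots,i_\nabla}$, the signs $\sigma(j):=\mathrm{sign}(x_j)\in\{\pm 1\}$ to satisfy $\sigma(i_2)\cdots\sigma(i_\nabla)=-\sigma(i_1)^{\nabla-1}$. If $\nabla$ were odd, the relation would read $\sigma(i_2)\cdots\sigma(i_\nabla)=-1$ at every edge tuple, and symmetrizing this over the indices (by symmetry of $A$) together with connectedness would force $\sigma$ to be constant on $V$, turning $\mathbf{x}$ into a signed positive eigenvector for $-\rho(A)<0$ and contradicting Perron uniqueness. Hence $\nabla$ is even and the relation rewrites as $\sigma(i_1)\sigma(i_2)\cdots\sigma(i_\nabla)=-1$ at every edge tuple; setting $V_1:=\{v_j:\sigma(j)=-1\}$ and $V_2:=V\setminus V_1$ yields the odd-bipartite partition in (1). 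The most delicate technical point in the Kirchhoff and Laplacian variants is that the argument must be run through the decomposition $K=D-A$ (respectively $\mathcal{L}=\id-D^{-1}A$), using the Perron $H$-eigenvalue $\mu=\rho(K^+)$ (respectively $2=\rho(\mathcal{L}^+)$) transported into $\hspec(K)$ (resp.\ $\hspec(L)$) by the hypothesis, and exploiting the bound $\mu\geq\Delta$ to preserve the inequality signs when applying the absolute-value trick to $K^+$ (respectively $\mathcal{L}^+$) instead of $A$.
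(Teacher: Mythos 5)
Your proposal is correct, and in substance it reconstructs exactly the argument that the paper delegates to the literature: the paper's proof of Theorem \ref{thm:oddbipartite2} is the single line ``as Theorems 2.2 and 2.3 in \cite{oddbipartite}, using Theorem \ref{thm:oddbipartite}'', and those cited proofs run along your lines (Theorem \ref{thm:oddbipartite} for the forward implications, Perron--Frobenius plus the absolute-value/triangle-inequality trick for the converses). So you have not taken a different route; you have filled in the route the paper leaves implicit, and correctly adapted it to the non-uniform weighted setting, including the observation that the sign relation $\sigma(i_1)\cdots\sigma(i_\nabla)=-1$ on all tuples (with repetitions) is precisely the paper's definition of odd-bipartite.

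Three points deserve attention. First, conditions (5) and (7) as printed, $\hspec(K)=\hspec(K)$ and $\hspec(L)=\hspec(L)$, are tautologies and must be typos for $\hspec(K)=\hspec(K^+)$ and $\hspec(L)=\hspec(L^+)$; you silently read them that way, which is the right call, but it should be stated. Second, the step in which $A|\mathbf{x}|^{\nabla-1}\geq\rho(A)\,|\mathbf{x}|^{[\nabla-1]}$ forces $|\mathbf{x}|$ to be the positive Perron vector is not literally contained in Theorem \ref{minmax:waklyirr}; it is the standard Collatz--Wielandt lemma for weakly irreducible non-negative tensors (if $T\mathbf{y}^{k-1}\geq\lambda\mathbf{y}^{[k-1]}$ with $\mathbf{y}\geq 0$, $\mathbf{y}\neq 0$, then $\lambda\leq\rho(T)$, with equality only if $\mathbf{y}$ is a positive eigenvector), and should be cited from \cite{QL}. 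Third, for the $K$-variant you need $\rho(K^+)\geq\Delta$ so that the coefficients $\rho(K^+)-\deg v_i$ are non-negative; the paper only proves $\rho(K^+)\geq 2\delta$, which does not suffice in general, so invoke the bound $\rho(T)\geq\max_i T_{i,\ldots,i}$ for non-negative tensors. With these references supplied the proof is complete; note also that in the odd-$\nabla$ case the symmetrized relation already gives $\sigma(u)=\sigma(v)$ for any $u,v$ in a common edge, so $\sigma$ is constant by connectedness and the product equals $+1$, a direct contradiction without any appeal to Perron uniqueness.
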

\begin{proof}
As Theorem 2.2 and Theorem 2.3 in \cite{oddbipartite}, using Theorem \ref{thm:oddbipartite} instead of \cite[Theorem 2.1]{oddbipartite}. 
\end{proof}
\begin{remark}
The second condition in Theorem \ref{thm:oddbipartite2} means that
\begin{equation*}
    \lambda \in \spec(A) \iff -\lambda \in\spec(A), \text{ with the same multiplicity,}
\end{equation*}and $\lambda \in \hspec(A) \iff -\lambda \in\hspec(A)$. Similarly, by Remark \ref{rmk2-}, the sixth condition in Theorem \ref{thm:oddbipartite2} means that 
\begin{equation*}
    \lambda \in \spec(L) \iff 2-\lambda \in\spec(L), \text{ with the same multiplicity,}
\end{equation*}and $
    \lambda \in \hspec(L) \iff 2-\lambda \in\hspec(L)$.
Moreover, by Proposition \ref{prop:isospectral}, the claim for $L$ and $L^+$ also holds for $\mathcal{L}$ and $\mathcal{L}^+$.
\end{remark}


\section{Computing the eigenvalues}\label{sec: computations}
In this section we apply software implementing methods from numerical algebraic geometry to compute the set of eigenvalues of a tensor and their geometric multiplicity. In particular, this applies to all tensors associated to hypergraphs which were presented in the previous sections. 
We shall consider here a $k$-th order $n$-dimensional tensor $T$. To such tensors we associate the set of solutions of the system of polynomial equations
\begin{align}\label{eq: system}
	\begin{cases}
	(T\mathbf{x}^{k-1})_1-\lambda x_1^{k-1}&=0\\
	&\vdots\\
	(T\mathbf{x}^{k-1})_n-\lambda x_n^{k-1}&=0.
	\end{cases}
\end{align}
 Here we use the symbols $x_1,\dots,x_n,\lambda$ as variables of our polynomials, and we will use $(\mathbf{x}^*,\lambda^*)\in\mathbb{C}^{n+1}$ for a specific solution. If $\lambda^*\in\spec(T)$, then we define the \emph{eigenvariety} of $\lambda^*$ as
 \[
 V(\lambda^*)=\{\mathbf{x}\in\C^n : (\mathbf{x},\lambda^*)\mbox{ is an eigenpair for }T\}.
 \]
The eigenvarieties are solutions of polynomial systems, so they are by definition \emph{algebraic varieties}. If we want to consider all eigenvarieties together, then it is convenient to look at their union in the bigger space $\C^{n+1}$, where $\lambda$ is considered a variable as well. Hence we define
\[
V=\{(\mathbf{x},\lambda)\in\C^{n+1} : (\mathbf{x},\lambda)\mbox{ is an eigenpair for }T\}
\]
to be the set of solutions of \eqref{eq: system}. In other words,
\begin{equation*}
	V=\{(0,\dots,0,\lambda): \lambda\in\mathbb{C}\}\cup\bigcup_{\lambda^*\text{ eigenvalue of } T} V(\lambda^*)\times\{\lambda^*\}.
\end{equation*}
The choice of considering $\lambda$ as a variable comes from our application-oriented approach. Indeed, $\lambda$ is a variable for the software.\newline
In general, eigenvarieties are not \emph{irreducible}, i.e., they can be further decomposed into a union of varieties, of possibly different dimensions. We shall see an example of this behaviour. For a fixed $\lambda^*$, 
the equations $(T\mathbf{x}^{k-1})_i-\lambda^* x_i^{k-1}=0$ are homogeneous of degree $k-1$. In particular, if $\mathbf{x}^*\in\mathbb{C}^n$ is an eigenvector of $T$ with eigenvalue $\lambda^*$, then $c \mathbf{x}^*$ is also an eigenvector with eigenvalue $\lambda^*$, for every $c\in\mathbb{C}^*$. In geometric terms, $V(\lambda^*)\subseteq \mathbb{C}^n$ is a \emph{cone}. When we deal with matrices, all eigenvarieties are linear spaces. However, for $k\geq 3$ eigenvarieties can have higher degrees. \\
Our main goal is to compute the eigenvalues of $T$. When the entries of $T$ can be represented on a field with exact arithmetic (for instance when they are rational numbers) we can   use \emph{Gr\"{o}bner basis} techniques to compute the characteristic polynomial of $T$, defined in Section \ref{sec: intro}.
 For instance, the package ``Resultants" of the computer algebra system Macaulay2 implements this idea. However, due to the notorious complexity of computing a Gr\"{o}bner basis, computations do not terminate even for rather small hypergraphs.\\
This motivates us to consider numerical methods to solve the polynomial system. In the last decade a lot of research has been carried out in developing methods based on the so-called \emph{homotopy continuation}. The idea is that in order to solve a system of polynomial equations $S$, one can first transform it into a similar, but simpler to solve, system $S'$. After solving $S'$, solutions of $S$ can be tracked back via a \emph{homotopy} between the two polynomial systems. There are several packages and sofware implementing this idea, such as Bertini and PHCpack. In the example below we use the package HomotopyContinuation.jl \cite{HC.jl}, developed in the language Julia, and we refer to the webpage \href{https://www.juliahomotopycontinuation.org/}{juliahomotopycontinuation.org} for documentation, examples and applications.\newline
In this way we are able to compute the geometric multiplicity of each eigenvalue, with high probability. However, it is not immediate to apply this technique to compute the span multiplicity nor the algebraic multiplicity.\newline
The software does not accept positive dimensional systems as input, and it suggests to add a generic affine linear equation to the system. We choose a generic affine linear polynomial $\ell\in \mathbb{C}[x_1,\dots,x_n]$. Observe that $\ell$ does not involve the variable $\lambda$. In this way the affine hyperplane defined by $\ell$ does not intersect the line $\{(0,\dots,0,\lambda): \lambda\in\mathbb{C}\}$. However, given the particular structure of our system, we can guarantee that there is a Zariski open subset of the space of affine linear polynomials $\ell$ of $\mathbb{C}[x_1,\dots,x_n]$ such that the hyperplane $\{(\mathbf{x},\lambda): \ell(\mathbf{x},\lambda)=0\}\subseteq \mathbb{C}^{n+1}$ intersects all irreducible components of all varieties $V(\lambda^*)\times \{\lambda^*\}$. 
\begin{lemma}
The general affine hyperplane of $\C^n$ intersects every irreducible component of $V(\lambda^*)$ for every $\lambda^*\in$ Spec$(T)$.
\end{lemma}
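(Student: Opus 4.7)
The plan is to exploit the fact that each eigenvariety $V(\lambda^*)$ is defined by homogeneous equations, and therefore is a cone in $\C^n$. Once I know that every irreducible component of every $V(\lambda^*)$ is a positive-dimensional cone, the lemma will follow from a direct and elementary genericity argument: any line through the origin meets a generic affine hyperplane in a single nonzero point, so since every such component contains a full line through the origin, a nonempty intersection with the generic hyperplane is automatic.

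The first step is to observe that the defining polynomials $(T\mathbf{x}^{k-1})_i - \lambda^* x_i^{k-1}$ are homogeneous of degree $k-1$ in $\mathbf{x}$, so $V(\lambda^*)$ is invariant under the scaling action $\mathbf{x} \mapsto c\mathbf{x}$ of $\C^*$. Since $\C^*$ is connected and permutes the finitely many irreducible components of $V(\lambda^*)$, each component is fixed setwise and is itself an affine cone with vertex at the origin. Because $\lambda^* \in \spec(T)$, the variety $V(\lambda^*)$ contains at least one nonzero point; since every component contains $0$ (being a cone), no component can equal $\{0\}$, and therefore every irreducible component $W$ has $\dim W \geq 1$.

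For the genericity step, fix such a component $W$ and choose any nonzero $\mathbf{x}_0 \in W$; the whole line $\C \cdot \mathbf{x}_0$ is contained in $W$. Write a candidate affine polynomial as $\ell(\mathbf{x}) = \sum_i a_i x_i + b$. Provided that $b \neq 0$ and $\sum_i a_i (\mathbf{x}_0)_i \neq 0$, the point $c \mathbf{x}_0$ with $c = -b / \sum_i a_i (\mathbf{x}_0)_i$ is a nonzero element of $W$ that lies on the hyperplane $\{\ell = 0\}$. These two non-vanishing conditions cut out a Zariski open dense subset in the space of affine linear polynomials on $\C^n$. Intersecting the finitely many such conditions, one for each irreducible component of each $V(\lambda^*)$ (both sets being finite), still yields a nonempty Zariski open subset of affine polynomials, proving the claim.

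The main subtlety I anticipate is justifying that each irreducible component is \emph{individually} a cone, rather than only knowing that $V(\lambda^*)$ as a whole is one. The connectedness of $\C^*$ acting on the finite set of components handles this cleanly, but it is the point worth flagging carefully in the written proof; once it is granted, the remaining argument is essentially linear algebra on a single line through the origin.
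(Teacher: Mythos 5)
Your proof is correct and follows essentially the same route as the paper: each irreducible component of $V(\lambda^*)$ is a cone, hence contains a line through the origin, and failing to meet a fixed line is a Zariski closed condition on affine hyperplanes. The only difference is that you spell out the $\C^*$-connectedness argument showing each component is individually a cone, a point the paper takes for granted.
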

\begin{proof}
Let $W$ be an irreducible component of $V(\lambda^*)$. As $W$ is a cone, it contains at least a line $L$. An affine hyperplane $H$ does not intersect $L$ if and only if $L$ is contained in the translate of $H$ containing the origin. This is a Zariski closed condition.
\end{proof}

\begin{example}\label{ex: figure A}
	Let $G=(V,E)$ be the unweighted hypergraph with $V=\{v_1,\ldots,v_5\}$ and $E = \{\{v_1,v_2,v_3,v_4\},\{v_1,v_5\},\{v_3,v_5\}\}$. This is the protein--protein interaction network depicted in \cite[Figure 1 A]{klamt2009hypergraphs}. \newline 
	\begin{figure}[ht]
		\centering
		\includegraphics[width=7cm]{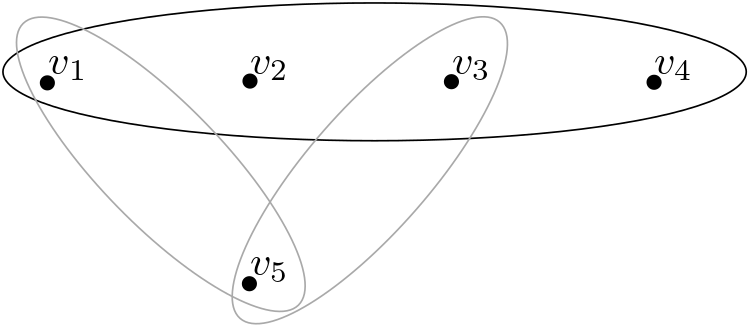}
		\caption{The hypergraph in Example \ref{ex: figure A}}
		\label{fig:Theis}
	\end{figure}\newline 
	As $\nabla=4$, $A,K,L,\mathcal{L}$ are all tensors in $\mathbb{C}^5\otimes \mathbb{C}^5 \otimes \mathbb{C}^5$. We consider the adjacency tensor $A$, i.e., we compute the solutions of \eqref{eq: system} with $T=A$. On a standard desktop computer the command {\texttt{resultant}} in Macaulay2 does not terminate the computation. This is perhaps not surprising, if we think that the characteristic polynomial has degree $5\cdot 3^4=405$.\\
	We then use the software \texttt{HomotopyContinuation.jl}, which can be called via the command \texttt{using HomotopyContinuation} in Julia. We start generating a random affine linear polynomial $p$ in $\mathbb{C}[x_1,\dots,x_n]$ and append it to the system \eqref{eq: system}.
	\begin{align*}
		\mathtt{p = sum(rand(Complex{Float64},5).*[x1,x2,x3,x4,x5])+rand(Complex{Float64})}
	\end{align*}
	 The syntax to define our system in \texttt{HomotopyContinuation.jl} is the following:
	\begin{align}\label{eq: system julia}
		&\mathtt{@var\enspace x1\enspace x2\enspace x3\enspace x4\enspace x5\enspace l;}\nonumber\\
		&\mathtt{f = System([-l*x1^3 + x2*x3*x4 + 3/7*x1^2*x5 + 3/7*x1*x5^2 + 1/7*x5^3,}\nonumber\\
		& \mathtt{-l*x2^3 + x1*x3*x4,}\\
		& \mathtt{-l*x3^3 + x1*x2*x4 + 3/7*x3^2*x5 + 3/7*x3*x5^2 + 1/7*x5^3,} \nonumber\\
		& \mathtt{-l*x4^3 + x1*x2*x3,}\nonumber\\
		& \mathtt{ -l*x5^3 + 1/7*x1^3 + 1/7*x3^3 + 3/7*x1^2*x5 + 3/7*x3^2*x5 + 3/7*x1*x5^2 + 3/7*x3*x5^2,}\nonumber\\
		& \mathtt{p]);}\nonumber
	\end{align}
	Here we use the symbol $\mathtt{l}$ in place of $\lambda$. We run the command \texttt{result = solve(f);} which stores the solution in a variable called ``result". The computation terminates in few seconds and the program prints the following information.
	
	\begin{figure}[h]
	    \centering
		\includegraphics[scale=0.8]{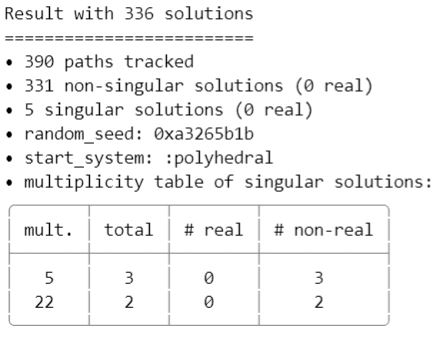}
	\end{figure}
	
	Let us comment on the output. The program found $331$ non-singular solutions $(\mathbf{x}^*,\lambda^*)\in\mathbb{C}^{n+1}$ and $5$ singular solutions. A solution is marked as singular using two classical parameters in complex analysis: the \emph{winding number} and the \emph{condition number} of the Jacobian of the system. In particular, singular solutions approximate solutions on a positive dimensional component of $V$, or solutions on a line which has multiplicity greater than $1$ in $V$. None of the found solutions are marked as real. It does not mean that there is no solution $(\mathbf{x}^*,\lambda^*)\in\mathbb{R}^{n+1}$, but simply that the eigenvectors corresponding to real eigenvalues found by the software have some complex coordinate. This is due to the choice of the polynomial $p\in\mathbb{C}[x_1,\dots,x_5]$. The reader concerned with real eigenpairs can apply the same procedure using generic polynomials in $\mathbb{Q}[x_1,\dots,x_5]$.\\
	We remark that certain steps in the solution process involve randomness, hence calling the function ``solve'' twice on the same starting system might lead to a different number of solutions. However, as we are interested in computing the eigenvalues, it suffices to find at least one point in each eigenvariety.\\
	To retrieve the non-singular solutions we simply type \texttt{solutions(result)}, which returns an array of $331$ vectors in $\mathbb{C}^6$. We are interested in the value of the coordinate $\lambda$ of each vector. These numbers are not all distinct in general. We can easily write a loop to discard those eigenvalues which are close to another one. We obtain a list of $64$ complex numbers which we plot in the complex plane (Figure \ref{fig: plot}). By further selecting only the real part of those with a sufficiently small imaginary part ($<10^{-15}$) we obtain the following $18$ real eigenvalues.
	\begin{center}
		\begin{tabular}{ l l }
		-1.1503540417366391 & $1.7219155529623352\cdot 10^{-41}$\\
		-1.0589738102553747 & 0.9382912060665167\\
		-1.0 & 0.9858713918602654\\
		-0.9233845418913038 & 1.0\\
		-0.5474615312663447 & 1.071873232613355\\
		-0.1532944068758618 & 1.0858832885825462\\
		-0.1484156441177043 & 1.2267760851792766\\
		-0.14163743538075982& 1.4284010786135974\\
		-0.25425744432182457& 1.73405913985699\\
		\end{tabular}
	\end{center}
	\begin{figure}[h]
		\includegraphics[scale=0.65]{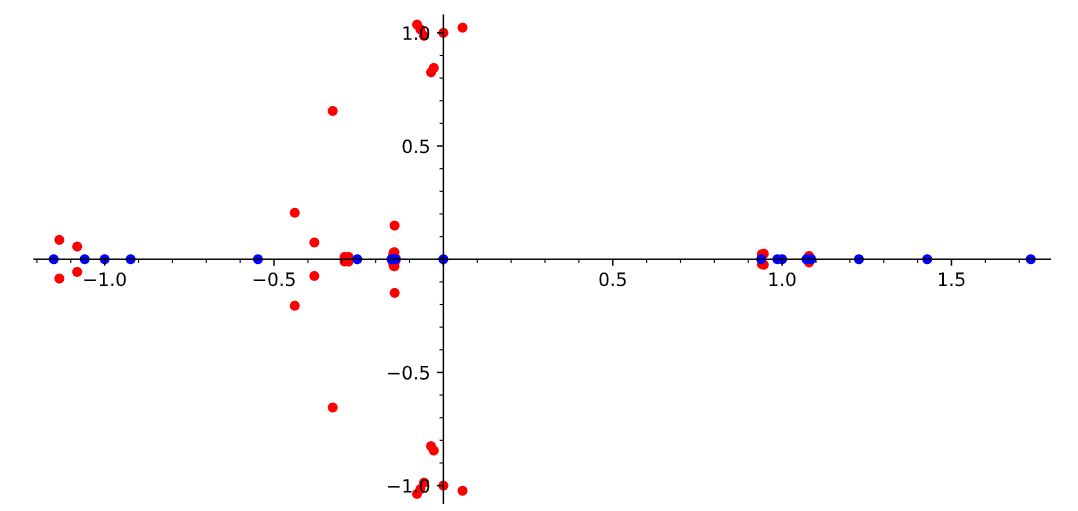}
		\caption{The $64$ eigenvalues for the adjacency tensor of the hypergraph in Example \ref{ex: figure A}, found by numerically solving the system of equations with \texttt{HomotopyContinuation.jl}. The blue points have an imaginary part smaller than $10^{-15}$.  }
		\label{fig: plot}
	\end{figure}
	The singular solutions $(\mathbf{x}^*,\lambda^*)$ in this example, which can be accessed typing 
	\begin{center}
	   \texttt{singular(result)}, 
	\end{center}
	 all satisfy $\lambda^*=0$.
	\end{example}
	The numerical solution can be employed to obtain information on the geometric multiplicity of eigenvalues. The idea is the following: we generate new generic linear polynomials $p_i\in\mathbb{C}[x_1,\dots,x_n]$, append the polynomials $p_1,\dots,p_k$ to \eqref{eq: system} and solve the system numerically. Observe that unlike $p$, the polynomials $p_i$ are not affine linear, i.e., there is no degree $0$. 
	
	\begin{lemma}
		Let $p\in\mathbb{C}[x_1,\dots,x_n]$ be an affine linear polynomial, and for $1\leq i\leq k$ let $p_i\in\mathbb{C}[x_1,\dots,x_n]$ be linear polynomials. If $(\mathbf{x}^*,\lambda^*)\in\mathbb{C}^{n+1}$ is a solution of the system
		\begin{align*}
		\begin{cases}
		&(T\mathbf{x}^{k-1})_1-\lambda x_1^{k-1}=0\\
		&\vdots\\
		&(T\mathbf{x}^{k-1})_n-\lambda x_n^{k-1}=0\\
		&p(\mathbf{x},\lambda)=p_1(\mathbf{x},\lambda)=\cdots=p_k(\mathbf{x},\lambda)=0,
		\end{cases}
		\end{align*}
		then $\lambda^*$ is an eigenvalue with $\text{gm}(\lambda)\geq k+1$. If after adding a new generic linear polynomial $p_{k+1}$ to the system there is no solution with $\lambda=\lambda^*$, then $\text{gm}(\lambda)= k+1$.
	\end{lemma}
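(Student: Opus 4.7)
The plan is to interpret $\text{gm}(\lambda^*)$ as the dimension of the eigenvariety $V(\lambda^*) \subseteq \C^n$ and to reduce the lemma to a standard dimension count for hyperplane sections. The crucial structural remark is that $V(\lambda^*)$ is a cone with apex at the origin, because the defining polynomial system is homogeneous in $\mathbf{x}$. The polynomials $p, p_1, \ldots, p_{k+1}$ involve only the variables $x_1, \ldots, x_n$, so each $\{p_i = 0\}$ cuts out a hyperplane in $\C^n$ and we are asking about the dimension of the intersection of $V(\lambda^*)$ with these hyperplanes.

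For the first assertion, I would argue that the existence of a solution $(\mathbf{x}^*, \lambda^*)$ to the augmented system makes $V(\lambda^*) \cap \{p=0\} \cap \{p_1=0\} \cap \cdots \cap \{p_k=0\}$ non-empty. Because $p$ is affine linear with $p(0)\ne 0$ generically, and $V(\lambda^*)$ is a cone through the origin, $p$ cannot vanish identically on any component of $V(\lambda^*)$, so the section $V(\lambda^*) \cap \{p=0\}$ has dimension exactly $\text{gm}(\lambda^*)-1$. Each subsequent generic linear $p_i$ does not vanish identically on the intermediate variety, so a Bertini-type argument shows that every successive intersection drops the dimension by exactly one. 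After all $k+1$ cuts the remaining set has dimension $\text{gm}(\lambda^*)-(k+1)$; its non-emptiness forces this to be $\ge 0$, yielding $\text{gm}(\lambda^*) \ge k+1$.

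For the second assertion I would argue by contrapositive: suppose $\text{gm}(\lambda^*) \ge k+2$ and show that a solution with $\lambda = \lambda^*$ still exists after adding a generic $p_{k+1}$. Applying the same dimension-drop argument to all $k+2$ cuts yields an intersection of dimension $\text{gm}(\lambda^*) - (k+2) \ge 0$, hence non-empty, contradicting the hypothesis. Combined with the first part, this forces $\text{gm}(\lambda^*) = k+1$.

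The main obstacle I expect is rigorously establishing that each generic linear cut drops the dimension by exactly one. An irreducible component of the intermediate variety could a priori lie inside a particular linear hyperplane, in which case the cut fails to reduce dimension. For generic $p_i$ this pathology is avoided by a Zariski-openness argument: the linear forms vanishing identically on a fixed positive-dimensional variety form a proper Zariski-closed subset of the space of linear forms, and the iterative choice of generic $p_i$ remains in the non-empty intersection of the finitely many resulting open conditions. Once this is in place, both bounds are immediate consequences of the dimension bookkeeping, and it is worth emphasising that the role of the affine $p$ (controlling the cone through its value at the origin) is conceptually different from that of the homogeneous $p_i$ (handled purely by genericity).
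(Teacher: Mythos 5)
The paper states this lemma without any proof, so there is no argument of the authors' to compare against; yours is essentially the intended one, namely the dimension count for generic hyperplane sections of the eigenvariety, using that $V(\lambda^*)$ is a cone through the origin and that $\mathrm{gm}(\lambda^*)=\dim V(\lambda^*)$. The first assertion follows from the direction you state (each cut lowers the dimension by at least one, so a non-empty $(k+1)$-fold section forces $\dim V(\lambda^*)\ge k+1$), and the second from your contrapositive.

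One point needs shoring up: ``drops the dimension by exactly one'' requires excluding two pathologies, not one. You handle the case where a component lies inside the cutting hyperplane (so the dimension fails to drop), but not the case where a component misses the hyperplane entirely (so the section becomes empty prematurely) --- and it is the latter that your contrapositive argument for the second assertion actually needs, since you must know that a positive-dimensional residual variety still meets the next generic hyperplane. For the affine cut $\{p=0\}$ this is precisely the content of the paper's preceding lemma: every irreducible component of $V(\lambda^*)$ is a cone, hence contains a line through the origin, and the affine hyperplanes missing a given such line form a proper Zariski-closed set. For the subsequent homogeneous cuts $\{p_i=0\}$ one argues analogously that the hyperplanes through the origin disjoint from a fixed positive-dimensional affine variety form a proper closed subset of the dual space (e.g.\ by passing to the projective closure). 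Both are genericity conditions of the same flavour as the one you already invoke, so the repair is routine. Finally, note that the first assertion is only true when the $p_i$ are themselves generic; you tacitly and correctly assume this, since that is how the polynomials are produced in the surrounding text, but the hypothesis is missing from the statement as written.
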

	Therefore, we can continue adding generic linear polynomials until no solutions are found. All eigenvalues are found in this way, with the correct geometric multiplicity. We illustrate this fact in Example \ref{example: julia}.
	\begin{example}\label{example: julia}
		As we pointed out in Example \ref{ex: figure A}, the unique eigenvalue obtained from a singular solution of \eqref{eq: system julia} is $\lambda^*=0$. Therefore this is the only eigenvalue which might have geometric multiplicity higher than $1$. To verify that this the case, we append to the system \eqref{eq: system julia} a random linear polynomial $p_1\in\mathbb{C}[x_1,\dots,x_5]$ and compute a solution numerically. We find solutions with $\lambda^*=0$, hence $\text{gm}(0)\geq 2$. Moreover, the system obtained adding another random linear polynomial $p_2$ does not admit any solution. We conclude that $\text{gm}(0)=2$, and all the other eigenvalues have geometric multiplicity $1$.
	\end{example}

\subsection*{Funding}
FG is supported by the National Science Center, Poland, project ``Complex contact manifolds and geometry of secants'', 2017/26/E/ST1/00231, and acknowledges partial support by the fund FRA 2018 of University of Trieste -- project DMG, funded by the MIUR Excellence Department Project awarded to the Department of Mathematics and Geosciences, University of Trieste. RM is supported by The Alan Turing Institute under the EPSRC grant EP/N510129/1. LV is funded by the G\"oran Gustafsson foundation.

\subsection*{Acknowledgments}We are grateful to the anonymous referee for the comments and suggestions that have greatly improved the first version of this paper.  We thank Christian Kuehn (TUM) for the helpful comments and suggestions. We would like to thank Bernd Sturmfels (MPI MiS), who has always actively encouraged us to work together. 

\bibliography{Tensors}

\end{document}